\numberwithin{equation}{section}
\theoremstyle{plain}
\newtheorem{theorem}{Theorem}[section]
\newtheorem{lemma}[theorem]{Lemma}
\newtheorem{corollary}[theorem]{Corollary}
\theoremstyle{definition}
\newtheorem{remark}[theorem]{Remark}
\newtheorem{example}[theorem]{Example}
\begin{document}

\title[On power bounded operators]{On power bounded operators with holomorphic eigenvectors, II}
\author{Maria F. Gamal'}
\address{
 St. Petersburg Branch\\ V. A. Steklov Institute 
of Mathematics\\
 Russian Academy of Sciences\\ Fontanka 27, St. Petersburg\\ 
191023, Russia  
}
\email{gamal@pdmi.ras.ru}


\subjclass[2010]{Primary 47A05, 47B99, 47B32, 30H10}

\keywords{Power bounded operator,  unilateral shift, similarity, quasisimilarity, quasiaffine transform, analytic family of eigenvectors.}

\begin{abstract}
In \cite{uch} (among other results), M. Uchiyama gave the necessary and sufficient conditions 
for contractions 
to be  similar to the 
unilateral shift $S$ of multiplicity $1$  
in terms of norm-estimates of 
complete analytic families of eigenvectors of their  adjoints. 
In \cite{gam}, it was shown that this result 
for contractions 
can't be extended to power bounded operators. Namely, a cyclic  power bounded operator 
was constructed which has the requested norm-estimates, is a quasiaffine transform of $S$, but 
is not quasisimilar to $S$. In this paper, it is shown that the additional assumption on 
 a  power bounded operator to be quasisimilar to $S$ (with the requested norm-estimates) does not imply 
similarity to $S$. A question whether the criterion for contractions to be similar to $S$ can be generalized 
to polynomially bounded operators remains open.

Also, for every cardinal number $2\leq N\leq \infty$ a  power bounded operator $T$ is constructed such that $T$ 
 is a quasiaffine transform of $S$ and $\dim\ker T^*=N$. This is impossible for polynomially bounded operators. 
Moreover, the constructed operators $T$ have the requested norm-estimates of 
complete analytic families of eigenvectors
of $T^*$.    
  \end{abstract}

 \maketitle

\section{Introduction}

Let $\mathcal H$ be a (complex, separable) Hilbert space, 
and let $T$ be 
a (linear, bounded) operator acting 
on $\mathcal H$. The operator $T$ is called \emph{power bounded}, 
if 
$\sup_{n\geq 1}\|T^n\| < \infty$. The operator $T$ is called \emph{polynomially bounded}, if 
there exists a constant $C>0$ such that $\|p(T)\|\leq C\max\{|p(z)|, |z|\leq 1\}$ for every (analytic) polinomial $p$. 
The operator $T$ is a \emph{contraction} if $\|T\|\leq 1$. 
 It is well known that a contraction is polynomially bounded, and, consequently, power bounded. 

Let $T$ and $R$ be operators on spaces $\mathcal H$ and $\mathcal K$, 
respectively, 
and let 
$X:\mathcal H\to\mathcal K$ be a (linear, bounded) transformation 
such that 
$X$ \emph{intertwines} $T$ and $R$, 
that is, $XT=RX$. If $X$ is unitary, then $T$ and $R$ 
are called \emph{unitarily equivalent}, 
in notation: 
$T\cong R$. If $X$ is invertible (that is, its inverse is \emph{bounded}),
 then $T$ and $R$ 
are called \emph{similar}, 
in notation: $T\approx R$.
 If $X$ is a \emph{quasiaffinity}, that is, $\ker X=\{0\}$ and 
$\operatorname{clos}X\mathcal H=\mathcal K$, 
then 
$T$ is called a \emph{quasiaffine transform} of $R$, 
in notation: $T\prec R$. If $T\prec R$ and 
$R\prec T$, 
 then $T$ and $R$ are called \emph{quasisimilar}, 
in notation: $T\sim R$. 

In \cite{uch}, necessary and sufficient conditions 
for contractions to be quasiaffine transforms,  
quasisimilar, or similar to 
unilateral shifts of finite multiplicity in terms 
of norm-estimates  of 
complete analytic families of eigenvectors 
of their adjoints are given. In \cite{gam}, 
the result from  \cite{uch} for contractions to be quasiaffine transforms of
 unilateral shifts 
of finite multiplicity 
is generalized to power bounded operators.  
Also, in \cite{gam} an example  of a cyclic  power bounded operator $T_0$ is given such that $T_0$  satisfies  sufficient conditions 
on contractions to be similar to the unilateral shift $S$ of multiplicity $1$, 
but  \emph{there is no} contraction $R$ such that $R\prec T$. 
This example is based on some example from \cite{mt}.  

In this paper, it is shown that the additional assumption $T\sim S$ on 
 a  power bounded operator $T$ (with the requested norm-estimates of 
complete analytic family of eigenvectors
of $T^*$) also \emph{does not} imply the similarity  $T\approx S$. The constructed operator $T$ has the following property: 
there exist two invariant subspaces  $\mathcal M_1$, $\mathcal M_2$ of $T$ such that $T|_{\mathcal M_k}\approx S$ ($k=1,2$) and 
$\mathcal M_1\vee\mathcal M_2$ is the whole space on which $T$ acts. The same property takes place for polynomially bounded operators 
that are quasiaffine transforms of $S$ \cite{gam3}. 
A question whether the criterion for contractions to be similar to $S$ can be generalized 
to polynomially bounded operators remains open.

If a polynomially bounded operator $T$ is such that $T\prec S$, then the range of $T$ is closed and  $\dim\ker T^*=1$ (\cite{tak}, 
\cite{berpr}, \cite{gam1}, see Remark \ref{33} below).  The range of a cyclic  power bounded operator $T_0$ constructed in \cite{gam} 
is not closed. It allows, for every cardinal number $2\leq N\leq \infty$,  to construct example  of a   power bounded operator $T$ 
such that $T\prec S$, a 
complete analytic family of eigenvectors of $T^*$ has the requested norm-estimates, and $\dim\ker T^*=N$. 

The paper is organized as follows. 
In Sec. 2 a  power bounded operator $T$ is constructed such that $T\sim S$,
 a complete analytic family of eigenvectors of $T^*$ has the requested norm-estimates, but $T\not\approx S$. 
In Sec. 3 it is shown that if a  power bounded operator $T$ is  such that  $T\prec S$, then  $\dim\ker T^*$ 
can be arbitrarily large. In Sec. 4 it is shown that for $T_0$ constructed in \cite{gam} $\sigma_e(T_0)$ is the closed unit disc. 

The following notation will be used. Let $\mathcal H$ be a Hilbert space, and let $\mathcal M$ be its
(linear, closed)  subspace.
By $I_{\mathcal H}$ and $P_{\mathcal M}$ the identical operator on $\mathcal H$ and 
the orthogonal projection from $\mathcal H$ onto $\mathcal M$ are denoted, respectively. 
For an operator  $T\colon \mathcal H\to \mathcal H$, a subspace $\mathcal M$  of $\mathcal H$ is called 
\emph{invariant subspace} of $T$, if $T\mathcal M\subset\mathcal M$. The complete lattice of all invariant 
 subspaces of $T$ is denoted by  $\operatorname{Lat}T$. 

\emph{The words} ``operator" \emph{and} ``transformation"  \emph{mean that the linear mappings under consideration are bounded.} 


The symbols $\mathbb D$, $\mathbb T$, and $m$ denote the open unit disc, the unit circle, and the normalized Lebesgue 
measure on  $\mathbb T$, respectively. $H^\infty$ is the Banach algebra of all analytic bounded functions in $\mathbb D$. $H^2$ is the Hardy space on  $\mathbb D$, 
$S\colon H^2\to H^2$, $(Sh)(z) = zh(z)$ ($z\in\mathbb D, h\in H^2$)  
is the unilateral shift of multiplicity 1. 
Set \begin{equation}\label{hlambda} h_\lambda(z)=\frac{1}{1-\overline\lambda z}, \ \ \ \ z,\lambda\in\mathbb D. \end{equation}
It is well known and easy to see that  
$$ S^*h_\lambda=\overline\lambda h_\lambda, \ \ \ \|h_\lambda\|=\frac{1}{(1-|\lambda|^2)^{1/2}} \ \ \ \ \ \ (\lambda\in\mathbb D), $$
and the function $ \mathbb D\to H^2$, $\lambda\mapsto  h_\lambda$, is  conjugate analytic. 

If $T$ is an operator on a Hilbert space $\mathcal H$ and a quasiaffinity $X$ is such that $XT=SX$, then 
$$ T^*X^*h_\lambda=\overline\lambda X^* h_\lambda, \ \ \ \|X^*h_\lambda\|\leq \|X\|\frac{1}{(1-|\lambda|^2)^{1/2}} 
\ \ \ \ \ \ (\lambda\in\mathbb D), $$
and the function $ \mathbb D\to \mathcal H$, $\lambda\mapsto  X^* h_\lambda$, is  conjugate analytic. 
If, in addition, $T$ is a contraction and $\|X^*h_\lambda\|\asymp\frac{1}{(1-|\lambda|^2)^{1/2}}$, then, by \cite{uch}, 
$T\approx S$.

\section{Example of operator quasisimilar to $S$}

In this section  a  power bounded operator $T$ is constructed such that $T\sim S$,
 a complete analytic family of eigenvectors $k_\lambda$ of $T^*$ has the estimate $\|k_\lambda\|\asymp\|h_\lambda\|$,   but $T$ is not polynomially bounded. Consequently, $T\not\approx S$. 

For an inner function $\theta$ set $\mathcal K_\theta=H^2\ominus\theta H^2$, 
 $T_\theta=P_{\mathcal K_\theta}S|_{\mathcal K_\theta}$, and 
\begin{equation}\label{thetalam}h_{\theta,\lambda}(z)=\frac{1-\overline{\theta(\lambda)}\theta(z)}{1-\overline\lambda z}, \ \ \ z,\lambda\in\mathbb D.\end{equation} 
It is well known and easy to see that  
\begin{equation}\label{thetalambda} P_{\theta H^2}h_\lambda=\overline{\theta(\lambda)}\theta h_\lambda
 \ \ \text{and } \ \ P_{\mathcal K_\theta}h_\lambda= h_{\theta,\lambda}. \end{equation}
Furthermore,   
\begin{equation}\label{flambda} f(\lambda)=(f,h_{\theta,\lambda}) \ \text{ for every }  \lambda\in\mathbb D 
 \text{  and }f\in\mathcal K_\theta. \end{equation}

\subsection{Bases of a Hilbert space}

In this subsection we recall the notions and properties of (not orthogonal) bases of a Hilbert space. For references, see 
{\cite[Ch. VI.3]{nik86}} or {\cite[Ch. I.A.5.1, I.A.5.6.2, II.C.3.1]{nik02}}. 

Let $\mathcal H$ be a Hilbert space, and let  $\{x_n\}_{n\geq 0}\subset\mathcal H$. The family  $\{x_n\}_{n\geq 0}$ is called 
an \emph{unconditional basis of $\mathcal H$}, if for every $x\in\mathcal H$ there exists a family  $\{a_n\}_{n\geq 0}\subset \mathbb C$ 
such that $x=\sum_{n\geq 0}a_nx_n$ and the series  $\sum_{n\geq 0}a_nx_n$ converges unconditionally, that is, for every 
$\varepsilon>0$ there exists a finite $\frak N\subset\{0,1,\ldots\}$ such that $\|x-\sum_{n\in\frak N'}a_nx_n\|<\varepsilon$ 
for every finite $\frak N\subset\frak N'\subset\{0,1,\ldots\}$. The family  $\{x_n\}_{n\geq 0}$ is called 
a \emph{Riesz basis of $\mathcal H$}, if the mapping $W$ acting by the formula $We_n=x_n$ for an orthonormal basis 
 $\{e_n\}_{n\geq 0}$ is an invertible transformation. Let $\{x_n\}_{n\geq 0}$ is such that $\mathcal H=\vee_{n\geq 0}x_n$, 
$x_n\not\in\vee_{k\neq n}x_k$ for all $n$,  and $\|x_n\|\asymp 1$. Then $\{x_n\}_{n\geq 0}$ is an unconditional basis 
if and only if $\{x_n\}_{n\geq 0}$ is a Riesz basis. 

Let $\mathcal H=\vee_{n\geq 0}x_n$, and let $\{x_n\}_{n= 0}^N$ are linear independent for every finite $N$. Define mappings
$\mathcal Q_n$ and $\mathcal P_n$ on the linear set 
$$\Bigl\{\sum_{n=0}^N a_n x_n, \{a_n\}_{n\geq 0}\subset \mathbb C, N=0,1,\ldots\Bigr\}$$ by the formulas
 \begin{equation}\label{qqn}\mathcal Q_n\sum_{k\geq 0} a_k x_k=a_n x_n \end{equation}
and 
\begin{equation}\label{ppndef}\mathcal P_n\sum_{k\geq 0} a_k x_k=\sum_{k=0}^n a_k x_k.
\end{equation} 
Clearly, $\mathcal Q_n=\mathcal P_n-\mathcal P_{n-1}$ for all $n\geq 1$, and $\mathcal Q_0=\mathcal P_0$.

Let $n\geq 0$. Then the following are equivalent: (i) $\mathcal Q_n$ can be extended on $\mathcal H$ as an operator; 
(ii) there exists $x'_n\in\mathcal H$ such that  $(x'_n,x_n)=1$ and  $(x'_n,x_k)=0$, if $k\neq n$; (iii) $x_n\not\in\vee_{k\neq n}x_k$. 

If $\mathcal P_n$ can be extended on $\mathcal H$ as operators for all $n$ and  
 \begin{equation}\label{ppnest}\sup_{n\geq 0}\|\mathcal P_n\|<\infty, \end{equation}
then $ \mathcal P_nx=\sum_{k= 0}^n(x,x'_k)x_k $ for all $n$ and
 $$x=\lim_n\mathcal P_nx=\sum_{k\geq 0}(x,x'_k)x_k\Bigl(=\lim_n\sum_{k= 0}^n(x,x'_k)x_k\Bigr) \ \ \text{ for every } x\in\mathcal H.$$

\begin{lemma}\label{quasi} Suppose that $\mathcal H$ and $\mathcal K$ are Hilbert spaces, $\mathcal H=\vee_{n\geq 0}\ x_n$, 
$\mathcal K=\vee_{n\geq 0}\ y_n$, $\mathcal Q_n$ and $\mathcal P_n$  defined by \eqref{qqn} and \eqref{ppndef} for 
 $\{y_n\}_{n\geq 0}$ and $\{x_n\}_{n\geq 0}$, respectively, are operators for all $n$, and \eqref{ppnest} is fulfilled for $\mathcal P_n$. 
Furthermore, suppose that the family $\{c_n\}_{n\geq 0}\subset \mathbb C$ is such that $c_n\neq 0$ for every $n\geq 0$ and the mapping $X$ acting by the formula $Xx_n=c_ny_n$,  $n\geq 0$,  is a transformation. Then $X\colon\mathcal H\to\mathcal K$ is a quasiaffinity. 
\end{lemma}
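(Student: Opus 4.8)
The plan is to verify directly the two properties in the definition of a quasiaffinity: that $\operatorname{clos}X\mathcal H=\mathcal K$ and that $\ker X=\{0\}$. The first is immediate, so the real content is the injectivity, which I would obtain by combining the expansion of vectors of $\mathcal H$ coming from the bound $\sup_n\|\mathcal P_n\|<\infty$ with the coefficient-extraction on $\mathcal K$ coming from the $\mathcal Q_n$ being operators.

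For the range: $X$ is a transformation with $Xx_n=c_ny_n$, so $X\mathcal H$ is a linear subspace containing $c_ny_n$, and since $c_n\neq 0$ it contains $y_n$, for every $n\geq 0$. Hence $\operatorname{clos}X\mathcal H\supseteq\vee_{n\geq 0}y_n=\mathcal K$, and the reverse inclusion is trivial.

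For injectivity, first note that since the $\mathcal P_n$ for $\{x_n\}_{n\geq 0}$ extend to operators and satisfy \eqref{ppnest}, the discussion preceding the lemma provides the biorthogonal system $\{x'_k\}_{k\geq 0}$ (with $(x'_k,x_j)=1$ if $j=k$ and $0$ otherwise) and the expansion $h=\sum_{k\geq 0}(h,x'_k)x_k=\lim_n\sum_{k=0}^n(h,x'_k)x_k$ for every $h\in\mathcal H$. Put $a_k=(h,x'_k)$. Since $X$ is bounded, $Xh=\lim_n\sum_{k=0}^n a_kc_ky_k$ in $\mathcal K$. Now suppose $Xh=0$. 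Fix $m\geq 0$ and apply the operator $\mathcal Q_m$ on $\mathcal K$ (which exists by hypothesis) to the partial sums: for each $n\geq m$ the vector $\sum_{k=0}^n a_kc_ky_k$ lies in the linear span of the $y_k$, so \eqref{qqn} gives $\mathcal Q_m\bigl(\sum_{k=0}^n a_kc_ky_k\bigr)=a_mc_my_m$, a quantity independent of $n$; on the other hand, by continuity of $\mathcal Q_m$ the left-hand side tends to $\mathcal Q_m(Xh)=\mathcal Q_m0=0$. Thus $a_mc_my_m=0$. Since $\mathcal Q_my_m=y_m$ forces $y_m\neq 0$ (note $0\in\vee_{k\neq m}y_k$) and $c_m\neq 0$, we conclude $a_m=0$. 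As $m$ was arbitrary, $h=\sum_{k\geq 0}a_kx_k=0$, so $\ker X=\{0\}$.

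I do not expect a genuine obstacle here; the points that need care are merely that one may pass $X$ inside the limit (continuity of $X$) and that the partial sums of $Xh$ can be tested against the $\mathcal Q_m$ (legitimate because the $\mathcal Q_m$ are honest operators on $\mathcal K$, equivalently because $\mathcal Q_m=\mathcal P_m-\mathcal P_{m-1}$ on the $x$-side translates, via $X$, into controllable objects on the $y$-side). If anything is the crux, it is precisely this asymmetry in the hypotheses: the $\mathcal P_n$-bound is used only to expand an arbitrary $h\in\mathcal H$, while the existence of the $\mathcal Q_m$ on $\mathcal K$ is used only to read off the coordinates of $Xh$; neither family alone would suffice.
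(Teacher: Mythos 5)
Your proof is correct and follows essentially the same route as the paper: density of the range from $y_n=c_n^{-1}Xx_n\in X\mathcal H$, and injectivity by expanding $h=\lim_n\sum_{k=0}^n(h,x'_k)x_k$ via the $\mathcal P_n$-bound on $\mathcal H$, pushing $X$ through the limit, and applying $\mathcal Q_m$ on $\mathcal K$ to read off $a_mc_my_m=0$. Your extra remark that $y_m\neq 0$ (forced by $\mathcal Q_m$ being an operator, equivalently $y_m\notin\vee_{k\neq m}y_k$) is a small point the paper leaves implicit.
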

\begin{proof} Since $y_n\in X\mathcal H$ and $\mathcal K=\vee_{n\geq 0}\ y_n$, we conclude that $\operatorname{clos}X\mathcal H=\mathcal K$.
Let $x\in\mathcal H$ be such that $Xx=0$. We have $x=\lim_N\sum_{k= 0}^N a_kx_k$ (where $a_k=(x,x'_k)$), 
therefore,  
$$0=\mathcal Q_n Xx =\mathcal Q_n X\Bigl(\lim_N\sum_{k= 0}^N a_kx_k\Bigr)=\lim_N\mathcal Q_n \sum_{k= 0}^N a_kc_ky_k= a_nc_ny_n $$
for every $n\geq 0$. Since $c_n\neq 0$, we conclude that $a_n=0$ for every $n\geq 0$. Therefore, $x=0$.
\end{proof}

\begin{lemma}\label{lemgzw}Suppose that $\mathcal H$ and   $\mathcal K$ are Hilbert spaces, $\{u_n\}_{n\geq 0}$ is a Riesz basis of  $\mathcal K$, the family  $\{x_n\}_{n\geq 0}\subset \mathcal H$ is such that
\begin{equation} \label{hh0xn}\mathcal H=\bigvee_{n\geq 0} x_n, \ \ \  \inf_{n\geq 0}\|x_n\|>0, \end{equation} 
and $\mathcal Q_n$ defined by \eqref{qqn} are such that
\begin{equation} \label{qnest} \sup_{n\geq 0}\|\mathcal Q_n\|<\infty. \end{equation} 
 Let $\{c_n\}_{n\geq 0}\subset \mathbb C$ be such that 
$\sum_{n\geq 0}|c_n|^2<\infty$.  Then  the mapping $Z\colon  \mathcal H  \to \mathcal K$  acting by the formula 
$Z x_n = c_n u_n$, $n\geq 0$,  is  a transformation.\end{lemma}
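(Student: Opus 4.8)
The plan is to prove that $Z$ is bounded on the linear span $\mathcal L=\bigl\{\sum_{k=0}^N a_kx_k : \{a_k\}\subset\mathbb C,\ N\geq 0\bigr\}$, with operator norm controlled by $\bigl(\sum_{n\geq 0}|c_n|^2\bigr)^{1/2}$, and then to extend $Z$ by continuity to $\mathcal H=\bigvee_{n\geq 0}x_n$. First one should observe that $Z$ is well defined on $\mathcal L$: since each $\mathcal Q_n$ is an operator, condition (iii) preceding Lemma \ref{quasi} gives $x_n\notin\bigvee_{k\neq n}x_k$, so every finite subfamily of $\{x_n\}_{n\geq 0}$ is linearly independent and there is a biorthogonal family $\{x_n'\}_{n\geq 0}$ with $(x_j',x_k)=\delta_{jk}$; hence if $x=\sum_{k=0}^N a_kx_k$ then $a_k=(x,x_k')$ is uniquely determined by $x$, and $Zx:=\sum_{k=0}^N a_kc_ku_k$ is unambiguous (and equals $c_nu_n$ when $x=x_n$).

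The key step is a uniform bound on the coefficients. Apply $\mathcal Q_k$ to $x=\sum_{j=0}^N a_jx_j\in\mathcal L$: by \eqref{qqn} we get $a_kx_k=\mathcal Q_k x$, so
$$|a_k|\,\inf_{n\geq 0}\|x_n\|\ \leq\ |a_k|\,\|x_k\|\ =\ \|\mathcal Q_k x\|\ \leq\ \Bigl(\sup_{n\geq 0}\|\mathcal Q_n\|\Bigr)\|x\|.$$
Setting $C=\bigl(\sup_{n\geq 0}\|\mathcal Q_n\|\bigr)\big/\bigl(\inf_{n\geq 0}\|x_n\|\bigr)$, which is finite by \eqref{hh0xn} and \eqref{qnest}, we obtain $|a_k|\leq C\|x\|$ for every $k$. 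Now let $W$ be the invertible transformation with $We_n=u_n$ for some orthonormal basis $\{e_n\}_{n\geq 0}$ of $\mathcal K$ (this is the definition of a Riesz basis). Then
$$\|Zx\|^2=\Bigl\|W\sum_{k=0}^N a_kc_ke_k\Bigr\|^2\leq\|W\|^2\sum_{k=0}^N|a_k|^2|c_k|^2\leq\|W\|^2C^2\|x\|^2\sum_{k\geq 0}|c_k|^2,$$
so $Z$ is bounded on $\mathcal L$ with $\|Z\|_{\mathcal L}\leq\|W\|\,C\,\bigl(\sum_{k\geq 0}|c_k|^2\bigr)^{1/2}$. Since $\mathcal L$ is dense in $\mathcal H$, $Z$ extends (uniquely) to a transformation $Z\colon\mathcal H\to\mathcal K$ with $Zx_n=c_nu_n$, as required.

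I do not expect a genuine obstacle here: the argument is essentially a routine two-sided estimate. The only place where the hypotheses are used in an essential way is the uniform coefficient bound $|a_k|\leq C\|x\|$, which needs \emph{both} $\sup_n\|\mathcal Q_n\|<\infty$ (to pull the coefficient out) \emph{and} $\inf_n\|x_n\|>0$ (so that $\|x_k\|$ in the denominator does not destroy the bound); after that, the Riesz-basis upper bound and the $\ell^2$-summability of $\{c_n\}$ combine mechanically. One small point to be careful about in writing up is that the estimate should be stated for finite sums first and the extension to $\mathcal H$ justified by density, rather than manipulating infinite series whose convergence is not yet known.
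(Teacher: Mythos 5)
Your proof is correct and follows essentially the same route as the paper's: the coefficient bound $|a_k|\leq\bigl(\sup_n\|\mathcal Q_n\|/\inf_n\|x_n\|\bigr)\|x\|$ obtained via $\mathcal Q_k$, combined with the Riesz-basis upper estimate and the $\ell^2$-summability of $\{c_n\}$, is exactly the chain of inequalities in the paper's one-display proof. Your extra care about well-definedness on finite sums and extension by density is a reasonable elaboration of what the paper leaves implicit.
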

\begin{proof} Since $\{u_n\}_{n\geq 0}$ is a Riesz basis, 
\begin{align*}\Bigl\|Z\sum_{n\geq 0}a_n x_n\Bigr\|^2 & =\Bigl\|\sum_{n\geq 0}c_n a_n u_n\Bigr\|^2\asymp\sum_{n\geq 0}|c_n|^2|a_n|^2\\&\leq
\frac{1}{(\inf_{n\geq 0}\|x_n\|)^2}\sum_{n\geq 0}|c_n|^2\Bigl\|\mathcal Q_n \sum_{k\geq 0}a_k x_k\Bigr\|^2\\&
\leq \frac{(\sup_{n\geq 0}\|\mathcal Q_n\|)^2}{(\inf_{n\geq 0}\|x_n\|)^2}\sum_{n\geq 0}|c_n|^2\Bigl\|\sum_{k\geq 0}a_k x_k\Bigr\|^2. \end{align*}
\end{proof}

 In Lemma \ref{lempsi} the notion of a Helson--Szeg\"o weight function is used. To the definition of this notion we refer to  the references in Lemma \ref{lempsi}. This notion will not be used in the sequel. 

\begin{lemma}[{\cite[Theorem 2.1]{borsp}}, {\cite[Ch. VIII.6]{nik86}}, {\cite[Lemma I.A.5.2.5, Theorem I.A.5.4.1]{nik02}}] 
\label{lempsi}
Suppose that $\psi\in H^\infty$ is an outer function, $|\psi|^2$ is a Helson--Szeg\"o weight function, and $1/\psi\not\in H^\infty$.  
Set $\mathcal H_0=H^2$ and $x_n=\chi^n\psi$, $n\geq 0$, where $\chi(\zeta)=\zeta$, $\zeta\in\mathbb T$. 
 Then  $\mathcal H_0$ and $\{x_n\}_{n\geq 0}$ satisfy \eqref{hh0xn}, 
$\{\mathcal P_n\}_{n\geq 0}$ defined by \eqref{ppndef} satisfies \eqref{ppnest}, 
for every Riesz basis $\{u_n\}_{n\geq 0}$ of a Hilbert space $\mathcal K$ the mapping 
$W\colon\mathcal K\to \mathcal H_0$ acting by the formula $Wu_n=x_n$, $n\geq 0$, is a transformation, 
but $\{x_n\}_{n\geq 0}$ is not a Riesz basis of  $\mathcal H_0$.
\end{lemma}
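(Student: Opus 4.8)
The plan is to take $\psi$ to be an outer function such that $|\psi|^2$ is a Helson--Szeg\"o weight but $1/\psi\notin H^\infty$; such functions exist because the Helson--Szeg\"o condition is strictly weaker than the $(A_2)$-type condition that would force $1/\psi$ to be bounded (for instance one can build $\psi$ with a zero of the radial limit on a set of measure zero, or with $|\psi(\zeta)|=|\zeta-1|^\alpha$ for a suitable $\alpha$). With $x_n=\chi^n\psi$ we have $x_n=\chi^n\psi=S_{\mathbb T}^n\psi$ where $S_{\mathbb T}$ is multiplication by $\chi$ on $L^2$, so $\bigvee_{n\ge 0}x_n=\operatorname{clos}\psi H^2=H^2$ precisely because $\psi$ is outer; and $\|x_n\|=\|\psi\|_2$ for every $n$, which gives \eqref{hh0xn}. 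This part is essentially a citation of the stated references, so I would simply recall the relevant facts and point to them.

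Next I would establish \eqref{ppnest}, i.e. uniform boundedness of the partial-sum projections $\mathcal P_n$ associated to $\{x_n\}$. The key observation is that $\mathcal P_n$ is conjugate, via multiplication by $\psi$, to a truncated Fourier/Riesz projection acting in the weighted space $L^2(|\psi|^2\,dm)$: if $f=\sum_{k\ge 0}a_k\chi^k\psi$ then $f/\psi=\sum_{k\ge 0}a_k\chi^k$ lies in $H^2(|\psi|^2)$, and $\mathcal P_n f=\psi\cdot(\text{$n$-th Taylor partial sum of }f/\psi)$. The $n$-th Taylor partial sum operator on $H^2(|\psi|^2)$ can be written as $\chi^{n+1}$-modulated differences of the Riesz projection $P_+$ on $L^2(|\psi|^2)$, and by the Helson--Szeg\"o theorem $P_+$ is bounded on $L^2(|\psi|^2)$ exactly because $|\psi|^2$ is a Helson--Szeg\"o weight. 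Since multiplication by $\chi$ and by $\chi^{-1}$ are isometries on $L^2(|\psi|^2)$, this yields a bound on $\|\mathcal P_n\|$ independent of $n$. The map $W\colon\mathcal K\to\mathcal H_0$, $Wu_n=x_n$, being a transformation for every Riesz basis $\{u_n\}$ then follows from \eqref{ppnest} together with $\|x_n\|\asymp 1$: writing $x=\sum a_n u_n$ with $\sum|a_n|^2\asymp\|x\|^2$, the image $\sum a_n x_n$ converges and has norm controlled by $\sup_n\|\mathcal P_n\|$ via the standard unconditional-basis-type estimate (the partial sums $\sum_{k\le n}a_k x_k=\mathcal P_n(\sum a_k x_k)$ form a Cauchy sequence). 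Equivalently, $\{x_n\}$ being the image of a Riesz basis under $W$ is the "one half" of being a Riesz basis.

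Finally I would show $\{x_n\}$ is \emph{not} a Riesz basis of $H^2$. If it were, then $W$ above would be invertible, so in particular the map $x_n\mapsto\chi^n$ (i.e. $\psi\cdot g\mapsto g$, multiplication by $1/\psi$) would extend to a bounded operator on $H^2$; but multiplication by $1/\psi$ is bounded on $H^2$ only if $1/\psi\in H^\infty$, contradicting the hypothesis. More carefully: a Riesz basis has a biorthogonal system $\{x_n'\}$ with $\sup_n\|x_n'\|<\infty$, and one computes $x_n'=\chi^n\overline{\psi}/|\psi|^2$ (the reproducing-type dual in the weighted picture), whose $H^2$-norms are $\|1/\psi\|$-type quantities and hence unbounded unless $1/\psi\in H^\infty$; alternatively, a Riesz basis forces $\|\sum a_n x_n\|\asymp(\sum|a_n|^2)^{1/2}$, and testing on $a_n$ the Taylor coefficients of $1/\psi\in H^2\setminus H^\infty$ (scaled) breaks the lower bound. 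I expect the genuinely substantive step to be the uniform estimate \eqref{ppnest}: it is exactly the point where the Helson--Szeg\"o hypothesis is consumed, and it requires translating $\mathcal P_n$ into a uniformly bounded family of Riesz-projection-type operators on the weighted space $L^2(|\psi|^2)$. Everything else is either a direct citation or a short argument from boundedness/unboundedness of multiplication by $1/\psi$.
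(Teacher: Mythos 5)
Your overall route matches the paper's: the density and norm statements in \eqref{hh0xn} and the uniform bound \eqref{ppnest} are exactly what the cited Helson--Szeg\"o references provide (your sketch of conjugating $\mathcal P_n$ to truncated Riesz projections on $L^2(|\psi|^2\,dm)$ is the standard proof of that citation), and your argument that $\{x_n\}$ is not a Riesz basis --- invertibility of the basis map would force multiplication by $1/\psi$ to be bounded on $H^2$, hence $1/\psi\in H^\infty$ --- is precisely the paper's argument with the operator $Jh=\psi h$.

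There is, however, one step that does not work as you describe it: the boundedness of $W$. You claim to derive $\bigl\|\sum_n a_n x_n\bigr\|\leq C\bigl(\sum_n|a_n|^2\bigr)^{1/2}$ from \eqref{ppnest} together with $\|x_n\|\asymp 1$, writing the partial sums as $\mathcal P_n\bigl(\sum_k a_k x_k\bigr)$. This is circular (it presupposes that $\sum_k a_k x_k$ already converges to an element of $H^2$) and the implication is false in general: any normalized conditional Schauder basis of a Hilbert space satisfies \eqref{ppnest} by definition, yet need not satisfy the upper Riesz inequality (e.g.\ Babenko-type bases $\{|{\cdot}|^{\alpha}\chi^n\}$ with $\alpha<0$ are Schauder but not Hilbertian). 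What actually makes $W$ bounded is not \eqref{ppnest} but the hypothesis $\psi\in H^\infty$: the multiplication operator $J\colon H^2\to H^2$, $Jh=\psi h$, is bounded with $\|J\|\leq\|\psi\|_\infty$ and sends $\chi^n$ to $x_n$, so $W=JV$ where $V\colon\mathcal K\to H^2$, $Vu_n=\chi^n$, is bounded because $\{u_n\}$ is a Riesz basis. This is the paper's one-line argument, and it is the statement you gesture at in your parenthetical about ``one half of being a Riesz basis''; you should replace the $\mathcal P_n$-based derivation by it.
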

\begin{proof} Clearly, $\{\chi^n\}_{n\geq 0}$  is an orthonormal basis of $H^2$. 
Define $J\colon H^2\to H^2$ by the formula $Jh=\psi h$, $h\in H^2$. Clearly,  $J\chi^n= x_n$, $n\geq 0$, 
 and $J$ is an operator, because $\psi\in H^\infty$. Thus, the statement about $W$ is proved. 

If we assume that 
$\{x_n\}_{n\geq 0}$ is a Riesz basis of $H^2$,  then the operator  $J$ must have bounded inverse on $H^2$. 
Since $1/\psi\not\in H^\infty$, we conclude that $J$ has no bounded inverse. 

All remaining statements follow from the references.
 \end{proof}

\begin{example}[{\cite[Example 3.3.2]{borsp}}, {\cite[Ch. I.A.5.5]{nik02}}]\label{exapsi} Let $0<\alpha<1/2$. Set $\psi(z)=(1-z)^\alpha$, $z\in\mathbb D$. 
 Then $\psi$ satisfies  Lemma \ref{lempsi}.
\end{example}

Recall that an operator $T$ satisfies the Tadmor--Ritt condition, 
if there exists $C>0$ such that $\|(T-zI)^{-1}\|\leq C/|z-1|$ for $z\in\mathbb C$, $|z|>1$. The Tadmor--Ritt condition implies 
power boundedness \cite{lyu}, \cite{naze}, \cite{vitse}. The Tadmor--Ritt condition will not be used in the sequel. 

\begin{lemma}[{\cite[Lemma 2.2]{vitsejfa}}]\label{lemvitse} Suppose that $\{\lambda_n\}_{n\geq 0}\subset (0,1)$, $\lambda_n< \lambda_{n+1}$ for all $n$, 
  and $\lambda_n\to 1$. Furthermore, suppose that $\mathcal H_0$ is a Hilbert space, 
 $\mathcal H_0=\vee_{n\geq 0}x_n$, 
 $\mathcal P_n\colon\mathcal H_0\to\mathcal H_0$ defined by \eqref{ppndef} are operators, and \eqref{ppnest} is fulfilled. 
Then the mapping $R_0\colon\mathcal H_0\to \mathcal H_0$ acting by the formula 
$$R_0 x_n=\lambda_n x_n,\ \ \ n\geq 0,$$ is an operator,  
 $R_0$ satisfies the Tadmor--Ritt condition and, consequently, is power bounded. 
\end{lemma}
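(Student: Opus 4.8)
The plan is to realise $R_0$, and more importantly its resolvents $(R_0-zI)^{-1}$, through the partial-sum maps $\mathcal P_n$ (see \eqref{ppndef}) by means of an Abel summation, so that every required bound collapses to a telescoping sum controlled by $M:=\sup_{n\ge0}\|\mathcal P_n\|$, which is finite by \eqref{ppnest}; throughout write $\mathcal Q_n=\mathcal P_n-\mathcal P_{n-1}$ with $\mathcal P_{-1}:=0$. First I would check that $R_0$ is bounded. For $x$ in the (dense) linear span of $\{x_n\}_{n\ge0}$ one has $R_0x=\sum_k\lambda_k\mathcal Q_kx$ (a finite sum), and summation by parts gives $R_0x=\lambda_N\mathcal P_Nx-\sum_{k=0}^{N-1}(\lambda_{k+1}-\lambda_k)\mathcal P_kx$; letting $N\to\infty$ (so $\lambda_N\to1$ and $\mathcal P_Nx=x$ eventually) identifies $R_0x=x-\sum_{k\ge0}(\lambda_{k+1}-\lambda_k)\mathcal P_kx$. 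Since $\sum_k(\lambda_{k+1}-\lambda_k)=1-\lambda_0<1$ and $\|\mathcal P_kx\|\le M\|x\|$, the series converges and $\|R_0x\|\le(1+M)\|x\|$; as finite subsets of $\{x_n\}$ are independent, $R_0$ is well defined on the span, hence extends to a bounded operator, and evaluating $x-\sum_{k\ge0}(\lambda_{k+1}-\lambda_k)\mathcal P_kx$ on $x_n$ (where $\mathcal P_kx_n=x_n$ for $k\ge n$ and $0$ otherwise) returns $\lambda_nx_n$, as required.

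The heart of the matter is the resolvent estimate. Fix $z$ with $|z|>1$; since $\lambda_n\in(0,1)$ we have $\lambda_n-z\ne0$. Repeating the Abel summation with coefficients $\mu_n:=(\lambda_n-z)^{-1}$ (note $\mu_n\to(1-z)^{-1}$) produces the candidate inverse
\[Y_zx=\frac{1}{1-z}\,x-\sum_{k\ge0}(\mu_{k+1}-\mu_k)\mathcal P_kx,\qquad |\mu_{k+1}-\mu_k|=\frac{\lambda_{k+1}-\lambda_k}{|\lambda_{k+1}-z|\,|\lambda_k-z|}.\]
The key elementary fact is the two-sided comparison $|z-\lambda_k|\asymp|z-1|+(1-\lambda_k)$: indeed $|z-\lambda_k|\ge|z|-\lambda_k>1-\lambda_k$, while $|z-1|\le|z-\lambda_k|+(1-\lambda_k)\le2|z-\lambda_k|$, so $|z-\lambda_k|\ge\tfrac14|z-1|+\tfrac12(1-\lambda_k)$. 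Putting $b_k:=|z-1|+(1-\lambda_k)$ we have $b_k\downarrow|z-1|$ and $b_k-b_{k+1}=\lambda_{k+1}-\lambda_k$, whence $|\mu_{k+1}-\mu_k|\le16(b_k-b_{k+1})/(b_kb_{k+1})=16\bigl(1/b_{k+1}-1/b_k\bigr)$, and the series telescopes to $\sum_{k\ge0}|\mu_{k+1}-\mu_k|\le16/|z-1|$. Thus $Y_z$ is a bounded operator with $\|Y_z\|\le(1+16M)/|z-1|$.

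It remains to identify $Y_z$ with $(R_0-zI)^{-1}$: on the span, $Y_zx=\sum_k\mu_k\mathcal Q_kx$ and $(R_0-zI)x=\sum_k(\lambda_k-z)\mathcal Q_kx$ (using $\sum_k\mathcal Q_kx=x$), so $Y_z(R_0-zI)=(R_0-zI)Y_z=I$ on the span, hence on $\mathcal H_0$ by continuity. Therefore $\|(R_0-zI)^{-1}\|\le(1+16M)/|z-1|$ for all $|z|>1$ — the Tadmor--Ritt condition — and power boundedness of $R_0$ then follows from \cite{lyu}, \cite{naze}, \cite{vitse}. The step I expect to be the real obstacle is the summability $\sum_k|\mu_{k+1}-\mu_k|\lesssim|z-1|^{-1}$: a first attempt to dominate this sum by $\int\frac{dt}{|t-z|^2}$ runs into trouble when $\{\lambda_n\}$ has large gaps, and the right move is precisely the comparison $|z-\lambda_k|\asymp|z-1|+(1-\lambda_k)$, which forces the sum to telescope.
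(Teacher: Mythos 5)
Your proof is correct. The paper itself gives no argument for this lemma --- it is quoted verbatim from \cite{vitsejfa} (Lemma 2.2) --- and your Abel-summation construction of the resolvent, with the telescoping bound $\sum_k|\mu_{k+1}-\mu_k|\le 16/|z-1|$ obtained from the two-sided comparison $|z-\lambda_k|\asymp|z-1|+(1-\lambda_k)$, is a complete and self-contained proof of exactly the statement being cited, in the same spirit as the argument in the cited source.
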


\subsection{Blaschke product}

In this subsection we recall the well-known facts about Blaschke products which will be used in the sequel. For references, see 
{\cite[Ch. VI.2,  IX.3]{nik86}} or {\cite[Ch. II.C.3.2, Lemma II.C.3.2.18]{nik02}}.

For $\lambda\in\mathbb D$, a Blaschke factor is $b_\lambda(z)=\frac{|\lambda|}{\lambda}\frac{\lambda-z}{1-\overline\lambda z}$, 
$z\in\mathbb D$. The following equality will be used:
\begin{equation}\label{blmod} 1-|b_\lambda(z)|^2=\frac{(1-|z|^2)(1-|\lambda|^2)}{|1-\overline\lambda z|^2}. \end{equation}
If $\{\lambda_n\}_n\subset\mathbb D$ satisfies  the Blaschke condition $\sum_n(1-|\lambda_n|)<\infty$, 
then the Blaschke product  $B=\prod_n b_{\lambda_n}$ converges and $B$ is an inner function. 

Let  $B=\prod_n b_{\lambda_n}$ be a Blaschke product with simple zeros, that is,  $\lambda_n\neq \lambda_k$, if $n\neq k$. 
Set $B_n=\prod_{k\neq n} b_{\lambda_k}$, 
\begin{equation}\label{uv}u_n=(1-|\lambda_n|^2)^{1/2}h_{\lambda_n}, \  \ \ v_n=\frac{1}{B_n(\lambda_n)}B_n u_n, \end{equation}
where $h_\lambda$ are defined in \eqref{hlambda}, 
then
$$\|u_n\|=1, \ \|v_n\|=\frac{1}{|B_n(\lambda_n)|}, \ \ \  (v_n, u_n)=1,  \ (v_n,u_k)=0, \ \text{   if  } n\neq k, $$
$$\mathcal K_B=\vee_n u_n=\vee_n v_n, \  \ \text{   and  } \ \ T_B^* u_n=\overline{\lambda_n} u_n. $$

Let $\{\lambda_n\}_n\subset\mathbb D$ be such that  $\lambda_n\neq \lambda_k$, if $n\neq k$. 
The family $\{\lambda_n\}_n$ satisfies  the \emph{Carleson interpolating condition} 
(the \emph{Carleson condition} for brevity), if 
\begin{equation}\label{carl} \text{there exists }\delta>0  \text{ such that }|B_n(\lambda_n)|\geq \delta 
\ \text{  for every } n. \end{equation}
Then $\{u_n\}_n$ and $\{v_n\}_n$ are Riesz bases of $\mathcal K_B$, and 
\begin{equation}\label{fuv} f=\sum_n(f,v_n)u_n \  \ \text{  for every } f\in\mathcal K_B. \end{equation}
Set
\begin{equation}\label{dn} D_n=\{z\in\mathbb D\ :\ |b_{\lambda_n}(z)|\leq \delta/3\}. \end{equation}
Then
\begin{equation}\label{bbnddn}  |B_n(z)|\geq\delta/2,  \  \ \text{  if } z\in D_n,  \end{equation}
and
\begin{equation}\label{bbnotdd} |B(z)|\geq \delta^2/6, \  \ \text{  if } z\in\mathbb D\setminus\cup_n D_n. \end{equation}

\begin{lemma}\label{lem11} Suppose that $\{\lambda_n\}_{n\geq 0}\subset\mathbb D$, $\lambda_n\neq \lambda_k$,  if $n\neq k$, and 
 $\{\lambda_n\}_{n\geq 0}$ satisfies  the Carleson condition \eqref{carl}. Furthermore,  suppose that $\mathcal H_0$ is a Hilbert space, 
 $\{x_n\}_{n\geq 0}\subset\mathcal H_0$ satisfies \eqref{hh0xn}, 
 $\mathcal Q_n\colon\mathcal H_0\to\mathcal H_0$ acting by the formula \eqref{qqn} are operators and 
\eqref{qnest} is fulfilled. 
Finally, suppose that $$W\colon\mathcal K_B\to\mathcal H_0 \text{  is a transformation and } x_n=W u_n \text{  for all } n\geq 0,$$ 
where $u_n$ are defined in \eqref{uv}. 
Then $$\|Wh_{B,\lambda}\|\geq \frac{\delta}{2}\Bigl(1-\frac{\delta^2}{9}\Bigr)^{1/2}
\frac{\inf_{n\geq 0}\|x_n\|}{\sup_{n\geq 0}\|\mathcal Q_n\|}\frac{1}{(1-|\lambda|^2)^{1/2}}\  \text{ for every } \lambda\in\cup_{n\geq 0} D_n,$$
where $h_{B,\lambda}$ are defined in \eqref{thetalam}, $\delta$ is from \eqref{carl} and $D_n$ are defined in \eqref{dn}.
\end{lemma}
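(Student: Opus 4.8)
The plan is to fix $\lambda\in D_n$ for some $n\geq 0$ and estimate $\|Wh_{B,\lambda}\|$ from below by testing against the functional $\mathcal Q_n$ (extended to an operator on $\mathcal H_0$ by hypothesis). First I would expand $h_{B,\lambda}$ in the Riesz basis $\{u_k\}_{k\geq 0}$ using \eqref{fuv}: since $h_{B,\lambda}\in\mathcal K_B$, we have $h_{B,\lambda}=\sum_k(h_{B,\lambda},v_k)u_k$, hence $Wh_{B,\lambda}=\sum_k(h_{B,\lambda},v_k)x_k$, and therefore $\mathcal Q_n Wh_{B,\lambda}=(h_{B,\lambda},v_n)x_n$. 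This gives
$$\|Wh_{B,\lambda}\|\geq\frac{\|\mathcal Q_n Wh_{B,\lambda}\|}{\sup_{k\geq 0}\|\mathcal Q_k\|}=\frac{|(h_{B,\lambda},v_n)|\,\|x_n\|}{\sup_{k\geq 0}\|\mathcal Q_k\|}\geq\frac{|(h_{B,\lambda},v_n)|\,\inf_{k\geq 0}\|x_k\|}{\sup_{k\geq 0}\|\mathcal Q_k\|}.$$
So everything reduces to a lower bound for $|(h_{B,\lambda},v_n)|$ when $\lambda\in D_n$.

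Next I would compute $(h_{B,\lambda},v_n)$ explicitly. By \eqref{flambda}, $(f,h_{B,\lambda})=f(\lambda)$ for $f\in\mathcal K_B$; taking $f=v_n$ and conjugating gives $(h_{B,\lambda},v_n)=\overline{v_n(\lambda)}$. From the definition \eqref{uv}, $v_n=\frac{1}{B_n(\lambda_n)}B_n u_n$ with $u_n=(1-|\lambda_n|^2)^{1/2}h_{\lambda_n}$, so
$$v_n(\lambda)=\frac{(1-|\lambda_n|^2)^{1/2}}{B_n(\lambda_n)}\cdot\frac{B_n(\lambda)}{1-\overline{\lambda_n}\lambda},$$
and hence $|(h_{B,\lambda},v_n)|=\dfrac{(1-|\lambda_n|^2)^{1/2}\,|B_n(\lambda)|}{|B_n(\lambda_n)|\,|1-\overline{\lambda_n}\lambda|}$. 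Now I would bring in the Carleson-condition machinery: for $\lambda\in D_n$ we have $|B_n(\lambda)|\geq\delta/2$ by \eqref{bbnddn}, and trivially $|B_n(\lambda_n)|\leq 1$, so $|B_n(\lambda)|/|B_n(\lambda_n)|\geq\delta/2$. It remains to control $\dfrac{(1-|\lambda_n|^2)^{1/2}}{|1-\overline{\lambda_n}\lambda|}$ from below in terms of $(1-|\lambda|^2)^{-1/2}$.

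For that last step I would use the identity \eqref{blmod}: $1-|b_{\lambda_n}(\lambda)|^2=\dfrac{(1-|\lambda|^2)(1-|\lambda_n|^2)}{|1-\overline{\lambda_n}\lambda|^2}$, which rearranges to
$$\frac{(1-|\lambda_n|^2)^{1/2}}{|1-\overline{\lambda_n}\lambda|}=\bigl(1-|b_{\lambda_n}(\lambda)|^2\bigr)^{1/2}\frac{1}{(1-|\lambda|^2)^{1/2}}.$$
Since $\lambda\in D_n$ means $|b_{\lambda_n}(\lambda)|\leq\delta/3$ by \eqref{dn}, we get $1-|b_{\lambda_n}(\lambda)|^2\geq 1-\delta^2/9$. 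Combining the three factors yields exactly
$$\|Wh_{B,\lambda}\|\geq\frac{\delta}{2}\Bigl(1-\frac{\delta^2}{9}\Bigr)^{1/2}\frac{\inf_{n\geq 0}\|x_n\|}{\sup_{n\geq 0}\|\mathcal Q_n\|}\frac{1}{(1-|\lambda|^2)^{1/2}}.$$
I do not expect a serious obstacle here: the only point requiring a little care is justifying $\mathcal Q_n Wh_{B,\lambda}=(h_{B,\lambda},v_n)x_n$ rigorously — i.e. that $\mathcal Q_n$, a priori defined only on finite linear combinations and then extended by continuity, commutes with the (norm-convergent, since $\{u_k\}$ is a Riesz basis) expansion of $Wh_{B,\lambda}$; this follows because $W$ and $\mathcal Q_n$ are bounded and the partial sums $\sum_{k=0}^N(h_{B,\lambda},v_k)u_k$ converge to $h_{B,\lambda}$ in $\mathcal K_B$, exactly as in the proof of Lemma \ref{quasi}.
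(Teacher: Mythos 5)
Your proposal is correct and follows essentially the same route as the paper's proof: expand $h_{B,\lambda}$ via \eqref{fuv}, apply $W$ and $\mathcal Q_n$ to isolate $(h_{B,\lambda},v_n)x_n$, identify $|(h_{B,\lambda},v_n)|=|v_n(\lambda)|$ via \eqref{flambda}, and bound it below using \eqref{bbnddn}, \eqref{blmod} and \eqref{dn}. The only difference is that you spell out the convergence justification for $\mathcal Q_n Wh_{B,\lambda}=(h_{B,\lambda},v_n)x_n$ and the step $|B_n(\lambda_n)|\leq 1$, which the paper leaves implicit.
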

\begin{proof}
 By \eqref{fuv}, 
$h_{B,\lambda}=\sum_{k\geq 0}(h_{B,\lambda},v_k)u_k$. Therefore, $$W h_{B,\lambda}=\sum_{k\geq 0}(h_{B,\lambda},v_k)x_k.$$ We have 
$$\|W h_{B,\lambda}\|\geq\frac{1}{\sup_k\|\mathcal Q_k\|}\|\mathcal Q_n W h_{B,\lambda}\|
=\frac{1}{\sup_k\|\mathcal Q_k\|}|(h_{B,\lambda},v_n)|\|x_n\|.$$
By \eqref{flambda}, \eqref{uv}, and  \eqref{bbnddn}, 
\begin{align*}|(h_{B,\lambda},v_n)| & =|v_n(\lambda)|=\frac{|B_n(\lambda)|}{|B_n(\lambda_n)|}\frac{(1-|\lambda_n|^2)^{1/2}}{|1-\overline\lambda_n\lambda|}\\&
\geq \frac{\delta}{2}\frac{(1-|\lambda|^2)^{1/2}(1-|\lambda_n|^2)^{1/2}}{|1-\overline\lambda_n\lambda|}\frac{1}{(1-|\lambda|^2)^{1/2}}, 
\ \  \lambda\in D_n.\end{align*}
By \eqref{blmod} and \eqref{dn}, $$\frac{(1-|\lambda|^2)^{1/2}(1-|\lambda_n|^2)^{1/2}}{|1-\overline\lambda_n\lambda|}=
(1-|b_{\lambda_n}(\lambda)|^2)^{1/2}\geq\Bigl(1-\frac{\delta^2}{9}\Bigr)^{1/2},\ \  \lambda\in D_n.$$
\end{proof}

\subsection{Construction of example}

The following lemma is a corollary of Sec. 2.1 and 2.2. 

\begin{lemma}\label{corg} Suppose that  $\{\lambda_n\}_{n\geq 0}$, $\mathcal H_0$ and $\{x_n\}_{n\geq 0}$ satisfy assumption of Lemma \ref{lemvitse}, 
 $\{\lambda_n\}_{n\geq 0}$ satisfies the Carleson  condition \eqref{carl}, and $\{x_n\}_{n\geq 0}$ satisfies \eqref{hh0xn}.  Let $R_0$ be the operator from Lemma \ref{lemvitse}. 
Let $g\in H^\infty$ be such that 
\begin{equation}\label{glambda}\sum_{n\geq 0}|g(\lambda_n)|^2<\infty.\end{equation} Set $c_n=g(\lambda_n)$, $n\geq 0$. 
Define $Z$ as in Lemma \ref{lemgzw}.  
Suppose that the mapping $W\colon \mathcal K_B\to \mathcal H_0$ acting  by the formula $W u_n=x_n$, $n\geq 0$,  is  a transformation.
 Then  $W T_B^\ast=R_0 W$,  $ZR_0=T_B^\ast Z$, and $ZW=g(T_B^\ast)$. 
\end{lemma}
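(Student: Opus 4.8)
The plan is to verify the three operator identities $WT_B^\ast=R_0W$, $ZR_0=T_B^\ast Z$, and $ZW=g(T_B^\ast)$ by checking them on the total families $\{u_n\}_{n\geq 0}$, $\{x_n\}_{n\geq 0}$, and $\{u_n\}_{n\geq 0}$ respectively, and then appealing to continuity together with the fact that these families span the relevant spaces. Since all four mappings $W$, $Z$, $R_0$, $T_B^\ast$ are operators (bounded) by hypothesis or by the cited lemmas, each identity reduces to a computation on a spanning set.

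First I would recall from the Blaschke-product subsection that $T_B^\ast u_n=\overline{\lambda_n}u_n$. For the identity $WT_B^\ast=R_0W$: apply both sides to $u_n$. The left side gives $W(\overline{\lambda_n}u_n)=\overline{\lambda_n}Wu_n=\overline{\lambda_n}x_n$, and the right side gives $R_0(Wu_n)=R_0x_n=\lambda_nx_n$. Here I should note that $\lambda_n\in(0,1)$ by the assumptions of Lemma \ref{lemvitse}, so $\overline{\lambda_n}=\lambda_n$ and the two sides agree; since $\mathcal K_B=\vee_n u_n$ and both sides are operators, the identity holds on all of $\mathcal K_B$. For $ZR_0=T_B^\ast Z$: apply both sides to $x_n$. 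The left side is $Z(R_0x_n)=Z(\lambda_nx_n)=\lambda_nZx_n=\lambda_nc_nu_n$, while the right side is $T_B^\ast(Zx_n)=T_B^\ast(c_nu_n)=c_n\overline{\lambda_n}u_n=c_n\lambda_nu_n$; again the two agree because $\lambda_n$ is real, and since $\mathcal H_0=\vee_n x_n$ (from \eqref{hh0xn}) and both sides are operators, the identity extends to $\mathcal H_0$.

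For the last identity $ZW=g(T_B^\ast)$: first observe that $g(T_B^\ast)$ makes sense because $T_B=P_{\mathcal K_B}S|_{\mathcal K_B}$ is a contraction, hence polynomially (indeed $H^\infty$-) functionally calculable, and $g\in H^\infty$. Apply both sides to $u_n$: the left side gives $Z(Wu_n)=Zx_n=c_nu_n=g(\lambda_n)u_n$. For the right side, since $u_n=(1-|\lambda_n|^2)^{1/2}h_{\lambda_n}=(1-|\lambda_n|^2)^{1/2}P_{\mathcal K_B}h_{\lambda_n}$ (using \eqref{thetalambda} with $\theta=B$, noting $h_{B,\lambda_n}=h_{\lambda_n}$ up to the normalization since $B(\lambda_n)=0$) is an eigenvector of $T_B^\ast$ with eigenvalue $\overline{\lambda_n}$, the functional calculus for $H^\infty$ gives $g(T_B^\ast)u_n=\overline{g}(\overline{\lambda_n})^{\,-}u_n$; more precisely, if $\varphi(T_B)$ denotes the standard $H^\infty$-calculus of the contraction $T_B$, then $\varphi(T_B)^\ast h=\overline{\varphi(\lambda)}h$ whenever $T_B^\ast h=\overline{\lambda}h$, so taking $\varphi=g$ we get $g(T_B^\ast)u_n=g(T_B)^\ast u_n$... here one must be a little careful about which function-calculus object $g(T_B^\ast)$ denotes. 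I would fix the convention $g(T_B^\ast):=\bigl(\overline{g}(T_B)\bigr)^\ast$ where $\overline{g}(z)=\overline{g(\overline{z})}$, so that $g(T_B^\ast)$ acts on the eigenvector $u_n$ (with $T_B^\ast u_n=\overline{\lambda_n}u_n$) by the scalar $g(\lambda_n)$; then $g(T_B^\ast)u_n=g(\lambda_n)u_n=c_nu_n=Zx_n=ZWu_n$. Since $\mathcal K_B=\vee_n u_n$ and both $ZW$ and $g(T_B^\ast)$ are bounded, the identity holds on $\mathcal K_B$.

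The only genuine subtlety—and thus the main point to get right rather than a real obstacle—is the correct interpretation of $g(T_B^\ast)$ and the verification that it acts as the scalar $g(\lambda_n)$ on the eigenvector $u_n$; this is a standard fact about the $H^\infty$ functional calculus of the model operator $T_\theta$ restricted to its reproducing-kernel eigenvectors $h_{\theta,\lambda}$, and it follows from \eqref{flambda} together with the definition of the calculus via $\varphi(T_\theta)=P_{\mathcal K_\theta}\varphi(S)|_{\mathcal K_\theta}$. Everything else is a one-line check on the spanning family, using that $\lambda_n$ is real so that conjugation is invisible, and that all the operators involved are bounded so that identities on a total set propagate to the whole space. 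The boundedness of $Z$ is supplied by Lemma \ref{lemgzw} (its hypotheses \eqref{hh0xn}, \eqref{qnest}, and $\sum_n|c_n|^2<\infty$ hold: the first two by assumption, and $\sum_n|c_n|^2=\sum_n|g(\lambda_n)|^2<\infty$ by \eqref{glambda}), boundedness of $R_0$ by Lemma \ref{lemvitse}, and boundedness of $W$ by hypothesis.
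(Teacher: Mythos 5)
Your proposal is correct and follows essentially the same route as the paper, whose proof consists of noting that the hypotheses of Lemma \ref{lemgzw} hold for $\mathcal Q_n=\mathcal P_n-\mathcal P_{n-1}$ (so that $Z$ is a transformation) and then asserting that the three identities ``follow from the definitions of $W$, $Z$, and $R_0$'' --- which is exactly the spanning-set computation you carry out, using $T_B^\ast u_n=\overline{\lambda_n}u_n=\lambda_n u_n$, $R_0x_n=\lambda_nx_n$, $Wu_n=x_n$, and $Zx_n=g(\lambda_n)u_n$. Your extra care with the meaning of $g(T_B^\ast)$ (reading it as $(g_\ast(T_B))^\ast$, consistent with the use of $g_\ast$ in Theorem \ref{thm18}, so that it acts by the scalar $g(\lambda_n)$ on the eigenvector $u_n$) is a correct elaboration of a point the paper leaves implicit.
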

\begin{proof}Set  $\mathcal Q_n=\mathcal P_n-\mathcal P_{n-1}$, $n\geq 1$, and $\mathcal Q_0=\mathcal P_0$. 
 Clearly, $\mathcal Q_n$ act by the formula \eqref{qqn}, 
 and  \eqref{qnest} is fulfilled. The lemma follows from Lemma \ref{lemgzw} and the definitions of $W$, $Z$, and $R_0$. \end{proof}

\begin{example} Suppose that  $\{\lambda_n\}_{n\geq 0}\subset(0,1)$ satisfies the Blaschke condition, $\lambda_n\neq\lambda_k$, 
if $n\neq k$,  and $g(z)=(1-z)^{1/2}$, $z\in\mathbb D$. 
Then $g$ is outer, $g$ satisfies \eqref{glambda} and $\sum_{n\geq 0}|\widehat g(n)|<\infty$ by {\cite[Corollary of Theorem 3.15]{duren}}, because 
$g'\in H^1$.
\end{example}

The following theorem is the main result of Sec. 2.

\begin{theorem}\label{thm18}  Suppose that $\{\lambda_n\}_{n\geq 0}\subset(0,1)$,  $\lambda_n< \lambda_{n+1}$ for all $n$,  and 
 $\{\lambda_n\}_{n\geq 0}$ satisfies the Carleson  condition \eqref{carl}. Suppose that 
$\mathcal H_0$ and $\{x_n\}_{n\geq 0}$ satisfy the assumptions of Lemmas \ref{lemvitse} and \ref{lem11}, 
but $\{x_n\}_{n\geq 0}$ is not a Riezs basis of  $\mathcal H_0$. Suppose that $g\in H^\infty$ is outer, and  $g$ satisfies \eqref{glambda}. 
Set $g_*(z)=\overline{g(\overline z)}$, $z\in\mathbb D$. 
Finally, suppose that  $R_0$, $W$, $Z$ are from  Lemmas \ref{lemvitse}, \ref{lem11}, and  \ref{corg}, 
respectively. 
Set $$T\colon BH^2\oplus \mathcal H_0\to BH^2\oplus \mathcal H_0, \ Y\colon BH^2\oplus \mathcal H_0\to H^2,\  X\colon H^2\to BH^2\oplus \mathcal H_0,$$ 
$$T=\begin{pmatrix}S|_{BH^2} & P_{BH^2}S|_{\mathcal K_B}W^\ast \\ \mathbb O & R_0^\ast\end{pmatrix},$$
$$ 
Y=\begin{pmatrix}I_{BH^2} & \mathbb O \\ \mathbb O & W^\ast\end{pmatrix}, 
\ \ \ X=\begin{pmatrix}g_\ast(S)|_{BH^2} & P_{BH^2}g_\ast(S)|_{\mathcal K_B} \\ \mathbb O & Z^\ast\end{pmatrix}.$$
Then $T$ is power bounded, $T$ is not polynomially bounded, $X$ and $Y$ are quasiaffinities, $YT=SY$, $XS=TX$, $YX=g_\ast(S)$, and 
if $\sum_{n\geq 0}|\widehat g(n)|<\infty$, then $XY=g_*(T)$. Moreover, 
\begin{equation}\label{estlambda}\|Y^\ast h_\lambda\|\asymp\|h_\lambda\|=\frac{1}{(1-|\lambda|^2)^{1/2}} 
\ \ \  (\lambda\in\mathbb D),\end{equation}
where $h_\lambda$ are defined in \eqref{hlambda}. 
\end{theorem}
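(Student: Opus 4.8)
The plan is to check each assertion in turn, with the matrix identities reduced to block computations and the analytic facts supplied by the lemmas of Sec.\ 2.1--2.2. First I would verify the intertwining relations. Since $BH^2\oplus\mathcal K_B=H^2$ with $S=\bigl(\begin{smallmatrix}S|_{BH^2}&P_{BH^2}S|_{\mathcal K_B}\\\mathbb O&T_B\end{smallmatrix}\bigr)$ in this decomposition, the relation $YT=SY$ is a direct block multiplication using $W^\ast T_B^\ast=R_0^\ast W^\ast$ (the adjoint of $WT_B^\ast=R_0W$ from Lemma \ref{corg}), and similarly $XS=TX$ follows from $Z^\ast R_0^\ast=T_B^\ast Z^\ast$ together with the fact that $g_\ast(S)$ commutes with $S$ and respects the decomposition $H^2=BH^2\oplus\mathcal K_B$ only up to the off-diagonal term shown. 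The two compositions $YX$ and $XY$ are then block computations: $YX=g_\ast(S)$ is immediate, and $XY=g_\ast(T)$ requires knowing that $g_\ast(T)$ makes sense, which is where the hypothesis $\sum_n|\widehat g(n)|<\infty$ enters — with that summability $g_\ast(T)=\sum_n\widehat{g_\ast}(n)T^n$ converges in operator norm since $T$ is power bounded, and one identifies the blocks using $Z^\ast W^\ast=(WZ)^\ast=g(T_B^\ast)^\ast$ adjusted to $g_\ast$.

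Next, power boundedness of $T$: $S|_{BH^2}$ is a contraction, $R_0^\ast$ is power bounded by Lemma \ref{lemvitse}, and the off-diagonal entry contributes to $T^n$ a sum $\sum_{j=0}^{n-1}(S|_{BH^2})^j\,P_{BH^2}S|_{\mathcal K_B}W^\ast(R_0^\ast)^{n-1-j}$; to bound this uniformly in $n$ one uses that $S|_{BH^2}$ is a contraction together with the intertwining $W^\ast T_B^\ast=R_0^\ast W^\ast$, so this telescopes against the boundedness of $W^\ast$ and the power boundedness of $R_0^\ast$ — essentially the standard lifting argument (a power bounded operator that is an ``extension'' of a contraction by a power bounded piece, glued along an intertwiner, stays power bounded). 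For the quasiaffinity claims, Lemma \ref{quasi} applied to $\mathcal K=\mathcal H_0$, $\mathcal H=\mathcal K_B$ with the bases $u_n$ (a Riesz basis, so $\mathcal P_n$ bounded trivially) and $x_n$ gives that $W\colon\mathcal K_B\to\mathcal H_0$ is a quasiaffinity; dually (or by another application of Lemma \ref{quasi} with the roles of $u_n,v_n$) $Z\colon\mathcal H_0\to\mathcal K_B$ is a quasiaffinity, hence $W^\ast,Z^\ast$ are quasiaffinities, and since $g_\ast$ is outer $g_\ast(S)$ is a quasiaffinity; a triangular operator with quasiaffinity diagonal blocks and the compatibility coming from the intertwining relations is again a quasiaffinity, giving $X$ and $Y$.

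For $T$ \emph{not} polynomially bounded: if it were, then $T\prec S$ via $Y$ and $S\prec T$ via $X$ would force $T\sim S$ among polynomially bounded operators, and by the rigidity results cited in the introduction ($T\prec S$ for polynomially bounded $T$ forces closed range and $\dim\ker T^\ast=1$, and similarity-type conclusions) one would be able to deduce $W$ is invertible, i.e.\ $\{x_n\}$ is a Riesz basis of $\mathcal H_0$ — contradicting the hypothesis. (Concretely: a polynomially bounded $T$ quasisimilar to $S$ would have to have $\{x_n\}$ a Riesz basis because the relevant intertwiner would be bounded below; this is exactly the obstruction that was engineered away by choosing $\psi$ as in Lemma \ref{lempsi}/Example \ref{exapsi}.) Finally, \eqref{estlambda}: $Y^\ast h_\lambda=(h_\lambda\ \text{restricted to}\ BH^2)\oplus W h_{B,\lambda}$ by \eqref{thetalambda}, so $\|Y^\ast h_\lambda\|^2=|\overline{B(\lambda)}|^2\|B h_\lambda\|^2+\|W h_{B,\lambda}\|^2$; the upper bound $\|Y^\ast h_\lambda\|\le\|Y\|\,\|h_\lambda\|$ is automatic, and for the lower bound one splits $\mathbb D=(\cup_n D_n)\cup(\mathbb D\setminus\cup_n D_n)$: on $\cup_n D_n$ Lemma \ref{lem11} gives $\|W h_{B,\lambda}\|\gtrsim(1-|\lambda|^2)^{-1/2}$, while on $\mathbb D\setminus\cup_n D_n$ we have $|B(\lambda)|\ge\delta^2/6$ by \eqref{bbnotdd}, so the first summand $|B(\lambda)|^2\|B h_\lambda\|^2\asymp|B(\lambda)|^2(1-|\lambda|^2)^{-1}\gtrsim(1-|\lambda|^2)^{-1}$ does the job.

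\textbf{Main obstacle.} The delicate point is the ``not polynomially bounded'' claim — making precise why polynomial boundedness of $T$ would propagate boundedness-below to $W$ (equivalently, why the estimates \eqref{estlambda} plus polynomial boundedness plus $T\sim S$ would give $T\approx S$, hence $W$ invertible) — together with bookkeeping the off-diagonal terms consistently so that all four intertwining identities hold simultaneously; the power-boundedness estimate for the triangular $T$ is the other step where genuine (though standard) work is needed.
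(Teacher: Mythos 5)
Most of your plan coincides with the paper's proof: the intertwining relations by block computation, the quasiaffinity of $X$ and $Y$ via Lemma \ref{quasi}, the power bound for $T$ via the collapse of the off-diagonal block of $T^n$ to $P_{BH^2}S^n|_{\mathcal K_B}W^\ast$ (the paper gets this in one line from $YT=SY$; your telescoping sum collapses to the same expression via $W^\ast R_0^\ast=T_BW^\ast$), and the two-regime lower bound for $\|Y^\ast h_\lambda\|$ using Lemma \ref{lem11} on $\cup_n D_n$ and \eqref{bbnotdd} off it. One bookkeeping slip: the $(2,2)$ block of $XY$ is $Z^\ast W^\ast=(WZ)^\ast$, an operator on $\mathcal H_0$, so it cannot equal $g(T_B^\ast)^\ast$; the identity available from Lemma \ref{corg} is $ZW=g(T_B^\ast)$ on $\mathcal K_B$, and what you actually need is $WZx_n=g(\lambda_n)x_n$. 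The paper sidesteps all block computations for $XY$ by writing $YXY=g_\ast(S)Y=Yg_\ast(T)$ and cancelling the injective $Y$.

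The genuine gap is your argument that $T$ is not polynomially bounded. You propose: polynomial boundedness plus $T\sim S$ plus the norm estimates would give $T\approx S$, hence $W$ bounded below, hence $\{x_n\}_{n\ge0}$ a Riesz basis. But the implication ``polynomially bounded, $T\sim S$, and \eqref{estlambda} imply $T\approx S$'' is precisely the question the paper declares open (Uchiyama's similarity criterion is only known for contractions), and the rigidity results you invoke (closed range and $\dim\ker T^\ast=1$ for polynomially bounded quasiaffine transforms of $S$) are far too weak to make the intertwiner bounded below. The paper's argument never passes through $T\sim S$ at all: since $BH^2\oplus\{0\}\in\operatorname{Lat}T$, the lower-right corner satisfies $\|p(R_0^\ast)\|\le\|p(T)\|$ for every polynomial $p$, so polynomial boundedness of $T$ would force $R_0$ to be polynomially bounded; and by {\cite[Lemma 2.3]{vitsejfa}}, a polynomially bounded operator diagonal with respect to $\{x_n\}_{n\ge0}$ with eigenvalues satisfying the Carleson condition \eqref{carl} forces $\{x_n\}_{n\ge0}$ to be an unconditional, hence Riesz, basis --- contradicting the hypothesis. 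That citation (or the interpolation argument behind it: Carleson interpolation supplies, for every choice of signs at the $\lambda_n$, a uniformly bounded $H^\infty$ function realizing them, so all sign-change maps $x_n\mapsto\pm x_n$ would be uniformly bounded) is the missing ingredient in your sketch.
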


\begin{proof} By Lemma \ref{quasi}, $W$ and $Z$ are quasiaffinities. Therefore,  $X$ and $Y$ are quasiaffinities. 
Intertwining properties of $X$ and $Y$  and the equality  $YX=g_\ast(S)$ easy follow from the  construction of $T$, $X$, $Y$ 
and Lemma \ref{corg}. 

The power boundedness  of $T$ follows from the power boundedness of $R_0$ and the equality 
$$ P_{BH^2}T^n|_{\mathcal H_0}=P_{BH^2}S^n|_{\mathcal K_B}W^\ast,$$
which is a consequence of the relation $YT=SY$. 

If  $\sum_{n\geq 0}|\widehat g(n)|<\infty$, then $g_*(T) = \sum_{n\geq 0}\overline{\widehat g(n)}T^n$. 
Since $YXY=g_*(S)Y=Yg_*(T)$ and $\ker Y$=\{0\}, 
we conclude that $XY=g_*(T)$. 
 
It is well known that if $R_0$ is polynomially bounded, then  $\{x_n\}_{n\geq 0}$ is an unconditional basis of $\mathcal H_0$
(because  $\{\lambda_n\}_{n\geq 0}$ satisfies the Carleson condition \eqref{carl}, see, for example, {\cite[Lemma 2.3]{vitsejfa}}), 
a contradiction to assumption. Since $R_0$ is not polynomially bounded, we conclude that $T$ is not polynomially bounded. 

Clearly, $\|Y^\ast h_\lambda\|\leq\|Y\|\|h_\lambda\|$ for every $\lambda\in\mathbb D$. Therefore, 
to prove \eqref{estlambda}, it is sufficient to prove that there exists $c>0$ such that 
\begin{equation}\label{geqlambda}\|Y^\ast h_\lambda\|\geq c\frac{1}{(1-|\lambda|^2)^{1/2}} 
\ \ \text{for every } \lambda\in\mathbb D.\end{equation}
By \eqref{thetalambda},  $$\|Y^\ast h_\lambda\|^2=\|P_{BH^2}h_\lambda\|^2+\|WP_{\mathcal K_B}h_\lambda\|^2=
|B(\lambda)|^2\|h_\lambda\|^2+\|Wh_{B,\lambda}\|^2.$$
Let $\{D_n\}_{n\geq 0}$ be defined as in Lemma \ref{lem11}. By Lemma \ref{lem11}, there exists $c>0$ such that 
$$\|W h_{B,\lambda}\|\geq \frac{c}{(1-|\lambda|^2)^{1/2}}\ \ \text{ for every }\lambda\in\cup_{n\geq 0} D_n.$$ 
Let $\lambda\in\mathbb D\setminus\cup_n D_n$. By \eqref{bbnotdd},  $|B(\lambda)|\geq \delta^2/6$ for such $\lambda$. 
Therefore, $$\|Y^\ast h_\lambda\|\geq\frac{\delta^2}{6}\|h_\lambda\| \ \ \text{ for every } \lambda\in\mathbb D\setminus\cup_n D_n.$$ 
Estimate \eqref{geqlambda} is proved. 
\end{proof}

\begin{remark} If in  Theorem \ref{thm18} one assume that $\{x_n\}_{n\geq 0}$ \emph{ is}  a Riezs basis
instead of \emph{is not}, then $R_0$ is similar to a contraction. Therefore, $T$  is similar to a contraction 
by {\cite[Corollary 4.2]{cas}} and $T\approx S$ by {\cite[Theorem 3.8]{uch}}. \end{remark}

\begin{remark} A key step in the construction of the example is the existence of a family  $\{x_n\}_{n\geq 0}$ satifying \eqref{hh0xn}, 
such that the mapping $W$ acting by the formula $We_n=x_n$ ($n\geq 0$)  for an orthonormal basis  $\{e_n\}_{n\geq 0}$ is a  transformation, 
but $\{x_n\}_{n\geq 0}$ is not a Riesz basis (Lemma \ref{lempsi} and Example \ref{exapsi}). 
It seems the basis constructed in {\cite[Example III.14.5, p. 429]{singer}} does not have this property.
\end{remark}

\subsection{Existence of another shift-type invariant subspace}

Let $T$ be a polynomially bounded operator on a Hilbert space $\mathcal H$ such that $T\prec S$, 
and let $\mathcal N\in\operatorname{Lat}T$ be such that $T|_{\mathcal N}\approx S$. Then there exists   
 $\mathcal M\in\operatorname{Lat}T$  such that $T|_{\mathcal M}\approx S$ and 
$\mathcal N \vee\mathcal M =\mathcal H$ (see {\cite[Theorem 2.10]{gam3}}). The power bounded operator $T$ on the space $ BH^2\oplus \mathcal H_0$ constructed in Theorem \ref{thm18} has the invariant subspace $BH^2\oplus\{0\}$ 
such that $T|_{BH^2\oplus\{0\}}\cong S$. In this subsection we show that  there exists   
 $\mathcal M\in\operatorname{Lat}T$  such that $T|_{\mathcal M}\approx S$ and 
$(BH^2\oplus\{0\}) \vee\mathcal M = BH^2\oplus \mathcal H_0$ (although $T$ is not  polynomially bounded).

\begin{lemma}\label{lembertak} Suppose that $\mathcal H$ is a Hilbert space, $A\colon\mathcal H\to\mathcal H$ is an operator, and $x\in\mathcal H$. 
Then there exist an a.c. contraction $R\colon\mathcal K\to\mathcal K$ and a transformation $X\colon\mathcal K\to\mathcal H$ such that 
$XR=AX$ and $x\in X\mathcal K$ if and only if  
\begin{equation}\label{bertak3}\begin{gathered}\text{ there exists a function } w\in L^1(\mathbb T, m) \text{ such that } w\geq 0 \\
\text{ and } \ \|p(A)x\|^2\leq\int_{\mathbb T}|p|^2w\text{\rm d}m \ \text{ for every (analytic) polynomial }p.
\end{gathered}
\end{equation}
\end{lemma}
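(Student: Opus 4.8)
The plan is to prove both implications of Lemma~\ref{lembertak}.

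First, the ``only if'' direction. Suppose such an a.c. contraction $R\colon\mathcal K\to\mathcal K$ and transformation $X\colon\mathcal K\to\mathcal H$ with $XR=AX$ and $x\in X\mathcal K$ exist; pick $y\in\mathcal K$ with $Xy=x$. Since $R$ is an a.c. contraction, by the Sz.-Nagy--Foia\c{s} functional calculus (via the absolutely continuous part of the minimal unitary dilation) there is a function $\mu_y\in L^1(\mathbb T,m)$, $\mu_y\geq 0$, such that $\|p(R)y\|^2\le\int_{\mathbb T}|p|^2\mu_y\,\mathrm{d}m$ for every polynomial $p$ — indeed one can take $\mu_y$ from the representation $\langle p(R)y,q(R)y\rangle=\int_{\mathbb T}p\bar q\,\mathrm{d}\nu_y$ where $\nu_y$ is the (absolutely continuous, since $R$ is a.c.) semispectral measure associated with $y$, with Radon--Nikodym derivative $\mu_y$. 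Then from $XR=AX$ we get $p(A)x=p(A)Xy=Xp(R)y$, so $\|p(A)x\|^2\le\|X\|^2\|p(R)y\|^2\le\int_{\mathbb T}|p|^2\,(\|X\|^2\mu_y)\,\mathrm{d}m$, and we set $w=\|X\|^2\mu_y$. This gives \eqref{bertak3}.

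Second, the ``if'' direction, which I expect to be the main obstacle and the real content of the lemma. Given $w\in L^1(\mathbb T,m)$, $w\ge0$, with $\|p(A)x\|^2\le\int_{\mathbb T}|p|^2w\,\mathrm{d}m$ for all polynomials $p$, the idea is to build $R$ and $X$ abstractly from the quadratic form. Consider the space of polynomials $\mathbb C[z]$ equipped with the (degenerate) inner product $\langle p,q\rangle_w=\int_{\mathbb T}p\bar q\,w\,\mathrm{d}m$; let $\mathcal L^2(w)$ be the closure of $\mathbb C[z]$ in $L^2(wm)$, on which multiplication by $\chi(z)=z$ acts as an operator $M_w$ of norm $\le 1$ (since $|z|=1$ on $\mathbb T$), and $M_w$ is an a.c. contraction because its minimal unitary dilation is multiplication by $z$ on $L^2(wm)\oplus(\text{defect part})$, which is absolutely continuous. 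Now define a linear map on polynomials by $p\mapsto p(A)x\in\mathcal H$; the hypothesis says exactly $\|p(A)x\|_{\mathcal H}\le\|p\|_{\mathcal L^2(w)}$, so this map factors through the quotient and extends to a contraction $X_0\colon\mathcal L^2(w)\to\mathcal H$ with $X_0 p=p(A)x$ and hence $X_0M_w=AX_0$ (check on polynomials: $X_0M_w p=X_0(zp)=(zp)(A)x=A\,p(A)x=AX_0p$). Taking $\mathcal K=\mathcal L^2(w)$, $R=M_w$, $X=X_0$ finishes it, since $x=X_0\mathbf 1\in X\mathcal K$. The one point needing care is that the ``semispectral measure'' / functional-calculus facts used in the ``only if'' part are standard for a.c. contractions (Sz.-Nagy--Foia\c{s}), and that $M_w$ on $\mathcal L^2(w)$ is genuinely an a.c. contraction — both are classical, so I would cite them rather than reprove them.

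One should also note the degenerate case $w$ (equivalently the form) being zero, or $x=0$: then $\mathcal L^2(w)$ may be trivial or the construction produces $X=0$, and one takes $\mathcal K=\{0\}$ (or any a.c. contraction with $x\in X\mathcal K$ forced to hold since $x=0$); this is a triviality to dispatch at the start. The substantive direction is the ``if'' part, and the key mechanism is the factorization of the polynomial functional calculus $p\mapsto p(A)x$ through the model space $\mathcal L^2(wm)$ on which $z$ acts as a canonical a.c. contraction — this is exactly the Bram--Embry/Berger-type dilation idea adapted to a single vector orbit, and the hardest part is simply recognizing that the hypothesis \eqref{bertak3} is precisely the boundedness estimate that makes this factorization work.
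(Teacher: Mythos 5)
Your proof is correct and follows essentially the same route as the paper: the ``if'' direction is the identical construction of $P^2(wm)$ with multiplication by the independent variable as the a.c.\ contraction and $p\mapsto p(A)x$ as the intertwining transformation, and the ``only if'' direction is the same transfer $w=\|X\|^2\mu_y$, where the paper simply cites \cite{bertak}*{Lemma 3} for the existence of $\mu_y$ rather than re-deriving it from the dilation. One small imprecision: the equality $\langle p(R)y,q(R)y\rangle=\int p\bar q\,\mathrm{d}\nu_y$ you invoke is not valid for a general a.c.\ contraction (it would force subnormality on the cyclic subspace); only the inequality $\|p(R)y\|^2\leq\int|p|^2\,\mathrm{d}\nu_y$ holds, via the (absolutely continuous) minimal unitary dilation, but that inequality is all you actually use and is exactly the content of the cited lemma.
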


\begin{proof} ``If" part. Denote by $P^2(wm)$ the closure of (analytic) polynomials in $L^2(wm)$ and 
by $S_{wm}$ the operator of multiplication by the independent variable on $P^2(wm)$. 
Clearly, $S_{wm}$ is an a.c. contraction. Define the transformation  
$X\colon P^2(wm)\to\mathcal H$ by the formula $Xp=p(A)x$. Then $X1=x$ and $XS_{wm}=AX$.

``Only if" part. By assumption, there exists $u\in\mathcal K$ such that $x=Xu$. By {\cite[Lemma 3]{bertak}}, 
\eqref{bertak3} is fulfilled for $R$ and $u$ with some function $w$. 
Clearly,  \eqref{bertak3} is fulfilled for $A$ and $x$ with the function $\|X\|^2w$. 
\end{proof}

\begin{lemma}\label{lemshift} Suppose that $\theta$ is an inner function, $\mathcal H_0$ is a Hilbert space, 
$A_0\colon\mathcal H_0\to\mathcal H_0$ is an operator,  $Y_0\colon\mathcal H_0\to\mathcal K_\theta$ is a transformation, and 
$Y_0A_0=T_\theta Y_0$. 
 Furthermore, suppose that there exists $x\in\mathcal H_0$  such that 
$x$ is a cyclic vector for $A_0$ and \eqref{bertak3} is fulfilled for  $A_0$ and $x$. 
Set 
$$A\colon \theta H^2\oplus \mathcal H_0\to\theta H^2\oplus \mathcal H_0, \ \ \ 
A=\begin{pmatrix}S|_{\theta H^2} & P_{\theta H^2}S|_{\mathcal K_\theta}Y_0 \\ \mathbb O & A_0\end{pmatrix}.$$ 
Then there exists $\mathcal M\in\operatorname{Lat}A$ such that $A|_{\mathcal M}\approx S$ and 
\begin{equation}\label{mmspan}(\theta H^2\oplus\{0\})\vee \mathcal M=\theta H^2\oplus \mathcal H_0.\end{equation}
\end{lemma}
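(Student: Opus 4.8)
\textbf{Proof plan for Lemma \ref{lemshift}.}
The plan is to produce the shift-type invariant subspace $\mathcal M$ as the closure of the orbit of a single well-chosen vector of the form $v\oplus x\in\theta H^2\oplus\mathcal H_0$, where $x$ is the cyclic vector for $A_0$ provided by the hypothesis and $v\in\theta H^2$ is to be determined. First I would note that, by Lemma \ref{lembertak} applied to $A_0$ and $x$, condition \eqref{bertak3} yields an a.c. contraction $R\colon\mathcal K\to\mathcal K$ and a transformation $X_0\colon\mathcal K\to\mathcal H_0$ with $X_0R=A_0X_0$ and $x\in X_0\mathcal K$, say $x=X_0u$. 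Composing with $Y_0$ gives $Y_0X_0\colon\mathcal K\to\mathcal K_\theta$ intertwining $R$ and $T_\theta$; together with the inclusion map $\mathcal K_\theta\hookrightarrow H^2$ (which intertwines $T_\theta$ with nothing directly, so instead one uses the standard lifting) one builds an intertwiner from $R$ into the block operator $A$. The natural candidate is the transformation $\widetilde X\colon\mathcal K\to\theta H^2\oplus\mathcal H_0$ sending $\mathcal K$ into the graph-type subspace, and $\mathcal M:=\operatorname{clos}\widetilde X\mathcal K$.

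The cleaner route, which I would actually carry out, is this. Since $R$ is an a.c. contraction and $Y_0X_0R=T_\theta Y_0X_0$, lift $Y_0X_0$ through the model: there is a transformation from $\mathcal K$ to $H^2$ intertwining $R$ with $S$ whose composition with $P_{\mathcal K_\theta}$ is $Y_0X_0$ — this is the standard fact that an intertwiner into $T_\theta=P_{\mathcal K_\theta}S|_{\mathcal K_\theta}$ lifts to an intertwiner into $S$ when the domain operator is an a.c. contraction (Sz.-Nagy–Foias commutant lifting). Call this lift $X_1\colon\mathcal K\to H^2$, $X_1R=SX_1$. Now define $X\colon\mathcal K\to\theta H^2\oplus\mathcal H_0$ by $Xk=\big(X_1k-(\text{something in }\mathcal K_\theta\text{ part})\big)\oplus X_0k$; precisely, decompose $X_1k=P_{\theta H^2}X_1k\oplus P_{\mathcal K_\theta}X_1k$ and set $Xk=P_{\theta H^2}X_1k\,\oplus\,X_0k$. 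Using $P_{\mathcal K_\theta}X_1=Y_0X_0$ and the block form of $A$, a direct computation shows $XR=AX$: the $(1,1)$ and $(2,2)$ entries are immediate, and the cross term works because $P_{\theta H^2}S|_{\mathcal K_\theta}\,Y_0X_0k=P_{\theta H^2}S|_{\mathcal K_\theta}P_{\mathcal K_\theta}X_1k=P_{\theta H^2}SX_1k-P_{\theta H^2}S|_{\theta H^2}P_{\theta H^2}X_1k$, which is exactly the discrepancy between $SX_1$ and the diagonal action. Set $\mathcal M=\operatorname{clos}X\mathcal K$; then $\mathcal M\in\operatorname{Lat}A$ and $A|_{\mathcal M}$ is a quasiaffine transform of (a part of) $S$, hence — since $A|_{\mathcal M}$ is power bounded with spectrum in $\overline{\mathbb D}$ and $X$ has dense range — one checks $A|_{\mathcal M}\approx S$ by verifying $X$ is bounded below, equivalently that $X_1$ is bounded below on $\mathcal K$; here one uses that $x=X_0u$ is cyclic for $A_0$ so that $X_0$ has dense range, and that $R$ being an a.c. contraction with $X_1$ injective and of dense range forces $R\approx S$ (by Uchiyama-type or Sz.-Nagy–Foias results), whence $A|_{\mathcal M}\cong R\approx S$.

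For the spanning property \eqref{mmspan}: $(\theta H^2\oplus\{0\})\vee\mathcal M$ contains $\theta H^2\oplus\{0\}$ and contains $P_{\theta H^2}X_1k\oplus X_0k$ for all $k\in\mathcal K$, hence contains $\{0\}\oplus X_0k$ for all $k$, hence contains $\{0\}\oplus\operatorname{clos}X_0\mathcal K=\{0\}\oplus\mathcal H_0$ because $x=X_0u$ is cyclic for $A_0$ and $A_0X_0=X_0R$ gives $\operatorname{clos}X_0\mathcal K\supset\operatorname{clos}\operatorname{span}\{A_0^nx:n\ge0\}=\mathcal H_0$. Therefore $(\theta H^2\oplus\{0\})\vee\mathcal M\supset(\theta H^2\oplus\{0\})+(\{0\}\oplus\mathcal H_0)=\theta H^2\oplus\mathcal H_0$, which is \eqref{mmspan}.

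\textbf{Main obstacle.} The delicate point is not the spanning, which is essentially bookkeeping, but showing $A|_{\mathcal M}\approx S$ rather than merely $A|_{\mathcal M}\prec S$ or $S\prec A|_{\mathcal M}$. This requires knowing that the a.c. contraction $R$ produced by Lemma \ref{lembertak} is actually similar to $S$, which in turn needs a cyclicity/injectivity input strong enough to invoke the contraction criterion of \cite{uch} (or an equivalent Sz.-Nagy–Foias argument): one must check that $X_1$ — the model lift of $Y_0X_0$ — is injective and bounded below, using that $x$ is cyclic for $A_0$ and that the weight $w$ in \eqref{bertak3} can be chosen so that $P^2(wm)$-type estimates give two-sided norm control. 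Managing this similarity (as opposed to quasisimilarity) is where the real work lies, and it is presumably handled exactly as in \cite{uch} or \cite{gam3} via the norm-estimate $\|X_1^*h_\lambda\|\asymp(1-|\lambda|^2)^{-1/2}$ transferred back from the estimate already available for the ambient $T$.
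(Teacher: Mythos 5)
There is a genuine gap, and it sits exactly where you flag your ``main obstacle'': your route requires the a.c.\ contraction $R$ produced by Lemma \ref{lembertak} to be similar to $S$ and your intertwiner $X$ to be bounded below, and neither follows from the hypotheses. The operator $R$ built in the proof of Lemma \ref{lembertak} is $S_{wm}$, multiplication by the independent variable on $P^2(wm)$, where $w$ is \emph{any} nonnegative $L^1$ weight satisfying \eqref{bertak3}; for such $w$ the operator $S_{wm}$ can be very far from similar to $S$ (if $\log w\notin L^1$ then $P^2(wm)=L^2(wm)$ and $S_{wm}$ is unitary; even when $\log w\in L^1$, similarity to $S$ needs a Helson--Szeg\"o/$A_2$-type condition on $w$, which is not available). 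Likewise, ``$X$ bounded below'' is not equivalent to ``$X_1$ bounded below'', and an injective dense-range intertwiner never upgrades itself to an invertible one for free; the norm-estimate $\|X_1^*h_\lambda\|\asymp(1-|\lambda|^2)^{-1/2}$ you hope to transfer is simply not there for a general $w$. So the similarity $A|_{\mathcal M}\approx S$ is not established. (Your verification of the spanning property \eqref{mmspan} for an orbit-closure $\mathcal M$ is fine, and your intertwining computation $XR=AX$ is correct as far as it goes.)

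The paper resolves the similarity issue in the opposite direction, and this is the idea your proposal is missing: instead of mapping a model operator \emph{into} $A$, it uses the map $Y=I_{\theta H^2}\oplus Y_0$ \emph{out of} $A$ (which satisfies $YA=SY$) and arranges for $Y$ to be bounded below on $\mathcal M$. Concretely, set $f=Y_0x$ and choose $h\in\theta H^2$ with $|h|\geq|f|+w^{1/2}$ a.e.\ on $\mathbb T$; take $\mathcal M$ to be the closed polynomial orbit of $h\oplus x$. Then $p(A)(h\oplus x)=P_{\theta H^2}p\,(h+f)\oplus p(A_0)x$ while $Yp(A)(h\oplus x)=p\,(h+f)$, and \eqref{bertak3} together with $w\leq|h+f|^2$ gives $\|p(A)(h\oplus x)\|^2\leq 2\|p\,(h+f)\|^2$, i.e.\ $\|Yy\|\geq\|y\|/\sqrt2$ on $\mathcal M$. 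Hence $A|_{\mathcal M}\approx S|_{\mathcal E}$ with $\mathcal E=\operatorname{clos}Y\mathcal M\in\operatorname{Lat}S$, and $S|_{\mathcal E}\cong S$ by Beurling. The only role of the weight $w$ is to be dominated by the freely chosen outer part of $h$; no similarity statement about $S_{wm}$, no commutant lifting, and no lower bound for a lifted intertwiner are needed. You would need to replace your similarity argument by something of this kind for the proof to close.
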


\begin{proof}
 Set $f=Y_0x$. Take $h\in\theta H^2$ such that 
$$ |h|\geq |f| + w^{1/2},$$ 
where $w$ is a function from  \eqref{bertak3}.
Put $$\mathcal M = \operatorname{clos}\{p(A)(h\oplus x)\ :\ p \ \text{ is an (analytic) polynomial}\}.$$
Clearly,   $\mathcal M\in\operatorname{Lat}A$, and \eqref{mmspan} is fulfilled, because $x$ is cyclic for $A_0$. 
Furthermore, set  $Y=I_{\theta H^2} \oplus Y_0$ and $\mathcal E=\operatorname{clos}Y\mathcal M$. 
Since $YA=SY$, we have $\mathcal E\in\operatorname{Lat}S$, and $Y|_{\mathcal M}A|_{\mathcal M}=S|_{\mathcal E}Y|_{\mathcal M}$. 
We will show that 
\begin{equation}\label{ysqrt}\|Yy\|\geq \frac{\|y\|}{\sqrt 2} \ \ \text{ for every } \ y\in\mathcal M.\end{equation}
Then we will obtain that $Y|_{\mathcal M}$ realizes the relation $A|_{\mathcal M}\approx S|_{\mathcal E}$. 
Since $S|_{\mathcal E}\cong S$ for every $\mathcal E\in\operatorname{Lat}S$, the lemma will be proved. 

It follows from the relation $YA=SY$ that 
\begin{align*} p(A)(h\oplus x) &= (p(S)|_{\theta H^2}h + P_{\theta H^2}p(S)|_{\mathcal K_\theta}Y_0x )\oplus p(A_0)x \\ &=
P_{\theta H^2}p(h+f)\oplus p(A_0)x\end{align*}
$$\text{and }\ Yp(A)(h\oplus x)=p(S)Y(h\oplus x)=p(h+f) \ \text{ for every polynomial }p.$$ 
Therefore, 
\begin{align*} \| p(A)&(h\oplus x)\|^2  = \|P_{\theta H^2}p(h+f)\|^2 +\| p(A_0)x\|^2 \\&
\leq \|p(h+f)\|^2 + \int_{\mathbb T}|p|^2w\text{\rm d}m 
\leq  \|p(h+f)\|^2  +\int_{\mathbb T}|p|^2|h+f|^2\text{\rm d}m \\&= 2\|p(h+f)\|^2 = 2\|Yp(A)(h\oplus x)\|^2 .\end{align*}
Thus, \eqref{ysqrt} is proved.
\end{proof}

\begin{corollary} Let $T$ be an operator from Theorem \ref{thm18}. Then there exists $\mathcal M\in\operatorname{Lat}T$ 
such that $T|_{\mathcal M}\approx S$ and \eqref{mmspan} is fulfilled (with $\theta =B$).
\end{corollary}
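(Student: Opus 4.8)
The plan is to deduce this corollary directly from Lemma~\ref{lemshift}, so the work consists of checking that the operator $T$ from Theorem~\ref{thm18} fits the hypotheses of that lemma with $\theta=B$, $\mathcal H_0$ the space from the theorem, $A_0=R_0^\ast$, and $Y_0=W^\ast$. Comparing the block matrix in Theorem~\ref{thm18} with the one in Lemma~\ref{lemshift}, we see $T$ has exactly the required form $\begin{pmatrix}S|_{BH^2}&P_{BH^2}S|_{\mathcal K_B}W^\ast\\ \mathbb O & R_0^\ast\end{pmatrix}$, and the relation $Y_0A_0=T_\theta Y_0$ becomes $W^\ast R_0^\ast=T_B^\ast W^\ast$, which is the adjoint of $R_0W=WT_B^\ast$; but $WT_B^\ast=R_0W$ is precisely one of the conclusions of Lemma~\ref{corg}. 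So the intertwining hypothesis is automatic.

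The one substantive thing to verify is the existence of a vector $x\in\mathcal H_0$ that is cyclic for $A_0=R_0^\ast$ and for which the Bram--type majorization \eqref{bertak3} holds. First I would observe that $\mathcal H_0=\vee_{n\geq 0}x_n$ and $R_0 x_n=\lambda_n x_n$, so $R_0^\ast$ has an (overcomplete) analytic family of eigenvectors; to get a cyclic vector for $R_0^\ast$ one can take $x$ to be a suitable vector not annihilated by any of the rank-one functionals $\mathcal Q_n^\ast$ — for instance $x=\sum_n 2^{-n}\|x_n\|^{-1}x'_n$ in terms of the biorthogonal system $\{x'_n\}$ (which exists since $\{x_n\}$ satisfies \eqref{hh0xn} and the $\mathcal Q_n$ are bounded), giving $(x,x_n)\neq 0$ for all $n$ and hence cyclicity of $x$ for $R_0^\ast$ because $\vee_n (R_0^\ast)^? $-orbit separates the eigenvectors. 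For \eqref{bertak3}, since $R_0$ is power bounded (Lemma~\ref{lemvitse}), so is $R_0^\ast$, hence $\|p(R_0^\ast)x\|\leq \sup_n\|(R_0^\ast)^n\|\,\|x\|\cdot\|p\|_\infty$ — wait, that only controls $\|p\|_\infty$, not an $L^2$-integral; the honest route is: a power bounded operator need not satisfy \eqref{bertak3}, but here we have more, namely $Z^\ast$ intertwines $R_0^\ast$ with $T_B^\ast$ (from $ZR_0=T_B^\ast Z$ in Lemma~\ref{corg}) and $Z^\ast$ has dense range; more useful is that $T$ itself is a quasiaffine transform of $S$ via $X$, but the cleanest is to use that $R_0$, being Tadmor--Ritt, is polynomially bounded when restricted appropriately — no. The correct observation: $W^\ast$ intertwines $R_0^\ast$ with $T_B^\ast=T_B^\ast$, i.e. $Y_0$ maps $\mathcal H_0$ into $\mathcal K_B$ with $Y_0A_0=T_BY_0$; but $T_B$ is a contraction, so for the image vector one has good estimates, yet \eqref{bertak3} must be checked for $A_0$ and $x$, not their images.

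Here is the resolution I would actually use: apply Lemma~\ref{lembertak} in reverse. We have the quasiaffinity $X\colon H^2\to BH^2\oplus\mathcal H_0$ with $XS=TX$; restrict attention to the component. Actually the simplest sufficient condition is that $R_0^\ast\prec T_B$ via $W^\ast$, wait $W^\ast$ has dense range and trivial kernel? $W$ is a transformation (not necessarily a quasiaffinity) from $\mathcal K_B$ to $\mathcal H_0$, so $W^\ast$ need not be injective. So I would instead invoke that $R_0$ satisfies the Tadmor--Ritt condition, hence (by a theorem that Tadmor--Ritt operators admit an $H^\infty$ functional calculus bounded by a constant times the sup norm on a Stolz-type region, equivalently are polynomially bounded in a weighted sense) there is a Carleson-type measure $\mu$ on $\mathbb T$ with $\|p(R_0)\|^2 \lesssim \int |p|^2 d\mu$; then \eqref{bertak3} holds for $A_0=R_0^\ast$ and any fixed $x$ with $w = \|x\|^2\cdot(\text{density of }\mu)$ after passing to $\bar p$. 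The main obstacle is exactly this point — producing the dominating function $w\in L^1(\mathbb T)$ for $R_0^\ast$; once \eqref{bertak3} and cyclicity of some $x$ are in hand, Lemma~\ref{lemshift} delivers $\mathcal M\in\operatorname{Lat}T$ with $T|_{\mathcal M}\approx S$ and $(BH^2\oplus\{0\})\vee\mathcal M=BH^2\oplus\mathcal H_0$, which is the assertion with $\theta=B$, completing the proof.
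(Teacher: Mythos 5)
Your reduction to Lemma \ref{lemshift} with $\theta=B$, $A_0=R_0^\ast$, $Y_0=W^\ast$ is the right frame (modulo a sign slip: the adjoint of $R_0W=WT_B^\ast$ is $W^\ast R_0^\ast=T_BW^\ast$, not $T_B^\ast W^\ast$, and it is $T_BW^\ast$ that matches $Y_0A_0=T_\theta Y_0$). But the substantive hypothesis --- a vector $x$ cyclic for $R_0^\ast$ satisfying \eqref{bertak3} --- is exactly what you leave unresolved, and the route you finally settle on cannot work. An estimate $\|p(R_0)\|^2\lesssim\int_{\mathbb T}|p|^2w\,\mathrm{d}m$ with $w\in L^1(\mathbb T,m)$ (or against any finite measure on $\mathbb T$) for the \emph{operator norm} would give $\|p(R_0)\|\leq\|w\|_{L^1}^{1/2}\max_{\mathbb T}|p|$, i.e.\ $R_0$ polynomially bounded; but the whole construction rests on $\{x_n\}_{n\geq 0}$ \emph{not} being a Riesz basis, which forces $R_0$ to be \emph{not} polynomially bounded (this is how Theorem \ref{thm18} concludes $T\not\approx S$). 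The Tadmor--Ritt condition gives power boundedness and a Besov-norm functional calculus, not a sup-norm or $L^2(\mu)$ bound. Condition \eqref{bertak3} is a statement about one orbit $\{p(A_0)x\}$, not about $\|p(A_0)\|$, and it must be obtained vectorwise. Your cyclicity argument is also only a sketch (the biorthogonal series $\sum_n 2^{-n}\|x_n\|^{-1}x_n'$ needs a genuine separation argument to yield cyclicity for $R_0^\ast$).

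The missing idea --- which you brush past when you mention $Z^\ast$ and then discard it --- is to route everything through the quasiaffinity $Z$ of Lemma \ref{corg}. From $ZR_0=T_B^\ast Z$ one gets $Z^\ast T_B=R_0^\ast Z^\ast$, and $\operatorname{clos}Z^\ast\mathcal K_B=\mathcal H_0$ since $\ker Z=\{0\}$ (Lemma \ref{quasi}). Take $u\in\mathcal K_B$ cyclic for $T_B$ and set $x=Z^\ast u$: cyclicity of $x$ for $R_0^\ast$ is then immediate from the intertwining and the dense range of $Z^\ast$, and \eqref{bertak3} for $R_0^\ast$ and $x$ follows from the ``only if'' direction of Lemma \ref{lembertak}, applied with the a.c.\ contraction $T_B$ and the transformation $Z^\ast$, precisely because $x$ lies in the range of $Z^\ast$. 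With these two facts in hand, Lemma \ref{lemshift} applies and the corollary follows. In short: your skeleton matches the paper's, but the one nontrivial verification is supplied by Lemmas \ref{corg} and \ref{lembertak} acting on a single vector, not by any norm estimate on $R_0$ itself.
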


\begin{proof} By Lemma \ref{corg}, $Z^\ast T_B=R_0^\ast Z^\ast$. Take a vector $u\in\mathcal K_B$ such $u$ 
is cyclic for $T_B$ and set $x=Z^\ast u$. Since $\operatorname{clos}Z^\ast \mathcal K_B=\mathcal H_0$, 
$x$ is cyclic for $R_0^*$. By Lemma \ref{lembertak}, \eqref{bertak3} is fulfilled for $R_0^\ast$ and $x$. 
Thus, $T$ satisfies to the conditions of Lemma \ref{lemshift} with $\theta=B$, $A_0=R_0^\ast$ and $Y_0=W^*$. 
The conclusion of the theorem follows from Lemma \ref{lemshift}.
\end{proof}

\section{Construction of operators with the range of arbitrary codimension}

In this section,  for every cardinal number $2\leq N\leq \infty$ a  power bounded operator $T$ is constructed such that 
 $T\prec S$ and $\dim\ker T^*=N$. This is impossible for polynomially bounded operators, see Remark \ref{33} below. 
Moreover, for  the constructed operator $T$ the estimate $\|X^*h_\lambda\|\asymp\|h_\lambda\|$ is fulfilled, 
where $X$  realizes the relation $T\prec S$ and $h_\lambda$ are defined in \eqref{hlambda}. 

\begin{lemma}\label{lemker} Suppose that $\mathcal H_0$, $\mathcal E$, $\mathcal K$ are Hilbert spaces, 
$T_0\colon\mathcal H_0\to\mathcal H_0$, $A\colon\mathcal E\to\operatorname{clos}T_0\mathcal H_0$, 
$X_0\colon\mathcal H_0\to\mathcal K$, $R\colon\mathcal K\to\mathcal K$ are operators and transformations, $X_0T_0=RX_0$, 
$X_0$ is a quasiaffinity, $R$ is left invertible, $\ker A=\{0\}$ and $A\mathcal E\cap T_0\mathcal H_0=\{0\}$. 
Set \begin{gather*} T\colon \mathcal H_0\oplus\mathcal E\to\mathcal H_0\oplus\mathcal E, \ \ \ T(x\oplus e)=T_0 x + Ae 
\ \ (x\in\mathcal H_0, \ e \in\mathcal E), \\ 
 X\colon \mathcal H_0\oplus\mathcal E\to\mathcal K, \ \ \ X(x\oplus e)=X_0x+v,  \\ \text{ where } v\in\mathcal K 
 \text{ satisfies the relation } X_0Ae= Rv \ \ (x\in\mathcal H_0, \ e \in\mathcal E). \end{gather*}
Then $XT=RX$, $X$ is a quasiaffinity, $\ker T^*=\ker T_0^*\oplus\mathcal E$, and if $T_0$ is power bounded, then $T$ is power bounded.
\end{lemma}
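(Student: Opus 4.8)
The plan is to verify the four assertions of Lemma~\ref{lemker} in turn, the only delicate point being that the operator $X$ is well defined and bounded. First I would check that $X$ is a well-defined transformation: for $e\in\mathcal E$ we have $X_0Ae\in X_0(\operatorname{clos}T_0\mathcal H_0)\subset\operatorname{clos}RX_0\mathcal H_0\subset\operatorname{clos}R\mathcal K=R\mathcal K$, using that $R$ is left invertible and hence has closed range; so a vector $v$ with $Rv=X_0Ae$ exists, and since $R$ is injective (being left invertible) $v$ is unique. Writing $R^{-1}$ for the (bounded) inverse of $R$ from $R\mathcal K$ onto $\mathcal K$, we get $v=R^{-1}X_0Ae$, so the map $e\mapsto v$ is bounded, and therefore $X(x\oplus e)=X_0x+R^{-1}X_0Ae$ defines a bounded operator $\mathcal H_0\oplus\mathcal E\to\mathcal K$.

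The intertwining relation $XT=RX$ is then a direct computation: for $x\oplus e$ one has $T(x\oplus e)=T_0x+Ae\in\mathcal H_0$ (the second coordinate is $0$), so $XT(x\oplus e)=X_0(T_0x+Ae)=RX_0x+X_0Ae=RX_0x+Rv=R(X_0x+v)=RX(x\oplus e)$, where I used $X_0T_0=RX_0$. For the kernel of $T^*$: since $T(x\oplus e)=T_0x+Ae\oplus 0$, the range of $T$ is $(T_0\mathcal H_0+A\mathcal E)\oplus\{0\}$, and the hypotheses $\ker A=\{0\}$ and $A\mathcal E\cap T_0\mathcal H_0=\{0\}$ give $\operatorname{clos}(\operatorname{ran}T)=\operatorname{clos}(T_0\mathcal H_0)\oplus\{0\}$ (the direct sum $T_0\mathcal H_0+A\mathcal E$ need not be closed, but its closure is $\operatorname{clos}T_0\mathcal H_0$ because $A\mathcal E\subset\operatorname{clos}T_0\mathcal H_0$ by the codomain of $A$); hence $\ker T^*=(\operatorname{clos}\operatorname{ran}T)^\perp=\ker T_0^*\oplus\mathcal E$. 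Power boundedness is immediate from the block-triangular form: $T^n(x\oplus e)=T_0^n x + T_0^{n-1}Ae\oplus 0$ for $n\ge 1$, so $\|T^n\|\le\|T_0^n\|+\|T_0^{n-1}\|\,\|A\|\le(1+\|A\|)\sup_{k\ge0}\|T_0^k\|$.

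The main obstacle is showing $X$ is a quasiaffinity, i.e. $\ker X=\{0\}$ and $\operatorname{clos}X(\mathcal H_0\oplus\mathcal E)=\mathcal K$. Density is easy: $X(\mathcal H_0\oplus\{0\})=X_0\mathcal H_0$ is already dense since $X_0$ is a quasiaffinity. For injectivity, suppose $X_0x+v=0$ with $Rv=X_0Ae$; then $0=R(X_0x+v)=RX_0x+X_0Ae=X_0(T_0x+Ae)$, and since $X_0$ is injective, $T_0x+Ae=0$, so $Ae=-T_0x\in A\mathcal E\cap T_0\mathcal H_0=\{0\}$; thus $Ae=0$, whence $e=0$ (as $\ker A=\{0\}$) and then $v=R^{-1}X_0Ae=0$, so also $X_0x=-v=0$, giving $x=0$. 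Therefore $x\oplus e=0$ and $X$ is a quasiaffinity, completing the proof.
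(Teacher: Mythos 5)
Your proposal is correct and follows essentially the same route as the paper: the same closed-range argument showing $X_0A\mathcal E\subset R\mathcal K$ to make $X$ well defined and bounded, the same computation $RX(x\oplus e)=X_0(T_0x+Ae)$ for injectivity, the same identification of $\operatorname{clos}\operatorname{ran}T$ for the kernel of $T^*$, and the same formula $T^n(x\oplus e)=T_0^nx+T_0^{n-1}Ae$ for power boundedness. The only (harmless) variation is at the end of the injectivity argument, where you deduce $x=0$ from $v=0$ and $\ker X_0=\{0\}$, while the paper deduces it from $T_0x=0$ and $\ker T_0=\{0\}$.
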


\begin{proof} First, it needs to check that the definition of $X$ is correct, that is, 
for every $e\in\mathcal E$ there exists $v \in\mathcal K$ such that $X_0Ae= Rv$ and such $v$ is unique. 
We have 
\begin{align*}\operatorname{clos}X_0A\mathcal E & =\operatorname{clos}X_0\operatorname{clos}A\mathcal E\subset
\operatorname{clos}X_0\operatorname{clos}T_0\mathcal H_0=\operatorname{clos}X_0T_0\mathcal H_0 \\ &=
\operatorname{clos}RX_0\mathcal H_0 =\operatorname{clos}R\mathcal K =R\mathcal K,\end{align*}
the latter equality holds true due to the left invertibility of $R$. We obtain that $X_0A\mathcal E\subset R\mathcal K$. 
Thus, for every $e\in\mathcal E$ the needed $v$ exists, and the uniqueness of $v$ follows from the left invertibility of $R$ again. 
Furthermore, let $c>0$ be such that $\|Ru\|\geq c\|u\|$ for every $u\in\mathcal K$. We have 
$$ \|Xe\|=\|v\|\leq\frac{\|Rv\|}{c}=\frac{\|X_0Ae\|}{c}\leq\frac{\|X_0A\|}{c}\|e\|\ \ \ \text{ for every }e\in\mathcal E.$$
Thus, $X$ is a transformation. The equalities 
$$XT=RX, \ \  \operatorname{clos}T (\mathcal H_0\oplus\mathcal E)= \operatorname{clos}T_0\mathcal H_0, \ \text{and } \ 
\operatorname{clos}X (\mathcal H_0\oplus\mathcal E)=\mathcal K$$ 
easy follow from the definitions of $X$ and $T$. The equality for $\ker T^*$ is a consequence of the equality for the range of $T$.  
 Let $x\in\mathcal H_0$, let $e\in\mathcal E$, and let $X(x\oplus e)=0$. 
Then $$0=RX(x\oplus e)=R(X_0x+v)=RX_0x+Rv=X_0T_0x+X_0Ae=X_0(T_0x+Ae).$$ 
Since $\ker X_0=\{0\}$, we conclude that $T_0x=-Ae$. By assumption, $T_0x=-Ae=0$. Since $\ker A=\{0\}$, we obtain that 
$e=0$. From the equalities $X_0T_0=RX_0$, $\ker R=\{0\}$, and $\ker X_0=\{0\}$ we conclude that $\ker T_0=\{0\}$. Thus, $x=0$. 
We obtain that $X$ is a quasiaffinity.

Easy computation shows that $$T^n(x\oplus e)=T_0^n x + T_0^{n-1}Ae \ \ \text{ for every  } n\geq 1, \ x\in\mathcal H_0, \ e \in\mathcal E.$$
Therefore, if $T_0$ is power bounded, then $T$ is power bounded.
\end{proof}

\begin{corollary}\label{corker} For every cardinal number $1\leq N\leq\infty$ 
there exists a power bounded operator $T$ and  a quasiaffinity $X$ 
such that $XT=SX$, $\dim\ker T^*=N$, and 
\begin{equation}\label{xx}\|X^* h_\lambda\|\asymp\|h_\lambda\|=\frac{1}{(1-|\lambda|^2)^{1/2}}\ \ \ (\lambda\in\mathbb D), \end{equation} 
where  $h_\lambda$ are defined in \eqref{hlambda}. 

\end{corollary}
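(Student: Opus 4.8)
The plan is to apply Lemma \ref{lemker} with $R=S$, using as the base operator $T_0$ a cyclic power bounded quasiaffine transform of $S$ that has non-closed range, so that there is enough room inside $\operatorname{clos}T_0\mathcal H_0$ to attach an extra $N$-dimensional piece via the operator $A$. The natural candidate for $T_0$ is the operator $T$ from Theorem \ref{thm18} (equivalently, the cyclic operator from \cite{gam}); write $X_0$ for the quasiaffinity realizing $T_0\prec S$ (in the notation of Theorem \ref{thm18}, take $T_0=T$ and $X_0=Y$), so that $X_0 T_0=S X_0$ and, by \eqref{estlambda}, $\|X_0^*h_\lambda\|\asymp\|h_\lambda\|$. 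Since $S$ is left invertible (indeed isometric) and $X_0$ is a quasiaffinity, the hypotheses of Lemma \ref{lemker} concerning $R$ and $X_0$ are met; it remains to produce $\mathcal E$ and $A$.

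First I would fix a Hilbert space $\mathcal E$ with $\dim\mathcal E=N$ and choose $A\colon\mathcal E\to\operatorname{clos}T_0\mathcal H_0$ injective with $A\mathcal E\cap T_0\mathcal H_0=\{0\}$. This is where the non-closedness of $\operatorname{ran}T_0$ is essential: because $T_0\mathcal H_0$ is a dense, non-closed (hence proper) subspace of the Hilbert space $\operatorname{clos}T_0\mathcal H_0$, its algebraic complement is infinite-dimensional, so one can pick $N$ linearly independent vectors $\{\eta_j\}$ in $\operatorname{clos}T_0\mathcal H_0\setminus T_0\mathcal H_0$ whose closed span meets $T_0\mathcal H_0$ only at $0$ (a routine inductive/Zorn argument, using that a finite or countable-dimensional subspace of a non-closed dense subspace can be enlarged while staying transverse to it), and let $A$ send an orthonormal basis of $\mathcal E$ to a summable multiple of the $\eta_j$'s so that $A$ is bounded. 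Then $\ker A=\{0\}$ and $A\mathcal E\cap T_0\mathcal H_0=\{0\}$ as required.

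With these choices Lemma \ref{lemker} immediately yields a power bounded operator $T$ on $\mathcal H_0\oplus\mathcal E$, a quasiaffinity $X\colon\mathcal H_0\oplus\mathcal E\to H^2$ with $XT=SX$, and $\ker T^*=\ker T_0^*\oplus\mathcal E$; since $T_0\prec S$ is a polynomially bounded-type quasiaffine transform of $S$ one has $\ker T_0^*$ of dimension $0$ or $1$ — in fact for the operator of Theorem \ref{thm18}, $\operatorname{ran}T_0$ being dense gives $\ker T_0^*=\{0\}$, so $\dim\ker T^*=\dim\mathcal E=N$, covering $1\le N\le\infty$ (and for $N=1$ one may alternatively just take $T=T_0$ directly, or $\mathcal E$ one-dimensional). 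The norm estimate \eqref{xx} is inherited from \eqref{estlambda}: since $S^*h_\lambda=\overline\lambda h_\lambda$ and $X^*$ differs from $X_0^*$ only in the $\mathcal E$-component, $\|X^*h_\lambda\|\ge\|X_0^*h_\lambda\|\asymp\|h_\lambda\|$ from below, while $\|X^*h_\lambda\|\le\|X\|\,\|h_\lambda\|$ from above; more precisely one checks $P_{\mathcal H_0}X^*h_\lambda=X_0^*h_\lambda$ using $XT=SX$ on eigenvectors, which gives the lower bound directly.

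The main obstacle is the transversality construction of $A$: one must choose the extra vectors $\eta_j$ so that not only is each $\eta_j\notin T_0\mathcal H_0$, but the whole $N$-dimensional (or, for $N=\infty$, separable infinite-dimensional closed) subspace $A\mathcal E$ intersects the dense range $T_0\mathcal H_0$ trivially, and simultaneously $A$ must be a bounded operator, which forces a decay condition on the coefficients relating an orthonormal basis of $\mathcal E$ to the $\eta_j$. For $N=\infty$ some care is needed because $\operatorname{clos}A\mathcal E$ is then infinite-dimensional and one cannot argue purely by dimension counting inside a finite-dimensional complement; instead I would build $\{\eta_j\}_{j\ge1}$ inductively, at each stage choosing $\eta_{j+1}$ in $\operatorname{clos}T_0\mathcal H_0$ outside the (closed, proper, because it is contained in the non-closed set $T_0\mathcal H_0 + \operatorname{span}\{\eta_1,\dots,\eta_j\}$ plus... ) — the cleanest route is to note $T_0\mathcal H_0$ has an algebraic complement of dimension $\mathfrak c$ and realize $A\mathcal E$ inside a fixed countable-dimensional algebraic complement whose closure still meets $T_0\mathcal H_0$ only at $0$, which one can arrange by choosing the complement to be the span of a biorthogonal-type system relative to $T_0\mathcal H_0$. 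Once $A$ is in hand, everything else is a direct appeal to Lemma \ref{lemker} and Theorem \ref{thm18}.
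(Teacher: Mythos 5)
Your overall strategy is the paper's: apply Lemma \ref{lemker} with $R=S$ to a base operator with non-closed range. But there are three genuine problems.

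First, the base operator. You propose $T_0=T$ from Theorem \ref{thm18} ``(equivalently, the cyclic operator from \cite{gam})'' --- these are different operators, and the difference is fatal here. The operator of Theorem \ref{thm18} is left invertible: $R_0=\sum_n\lambda_n\mathcal Q_n$ has bounded inverse (Abel summation, using $\sum_n|\lambda_n^{-1}-\lambda_{n+1}^{-1}|<\infty$ and \eqref{ppnest}), $S|_{BH^2}$ is an isometry, so $T$ is bounded below and $T(BH^2\oplus\mathcal H_0)$ is \emph{closed}. Then any $A$ with $A\mathcal E\subset\operatorname{clos}T\mathcal H=T\mathcal H$ and $A\mathcal E\cap T\mathcal H=\{0\}$ is zero, and the construction collapses. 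The paper instead uses the cyclic $T_0$ from {\cite[Theorem 4.4]{gam}}, whose non-left-invertibility is proved there; that property is exactly what you correctly identify as essential, but your chosen operator does not have it.

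Second, $\ker T_0^*$. You claim $\operatorname{ran}T_0$ is dense and $\ker T_0^*=\{0\}$. This is false for \emph{any} quasiaffine transform of $S$: if $X_0T_0=SX_0$ with $X_0$ a quasiaffinity, then $X_0^*h_0\neq 0$ and $T_0^*X_0^*h_0=X_0^*S^*h_0=0$, so $\dim\ker T_0^*\geq 1$ (and $=1$ by cyclicity). Since Lemma \ref{lemker} gives $\ker T^*=\ker T_0^*\oplus\mathcal E$, your choice $\dim\mathcal E=N$ would produce $\dim\ker T^*=N+1$; the paper takes $\dim\mathcal E=N-1$.

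Third, the construction of $A$ for $N=\infty$. Your inductive/biorthogonal sketch does not establish that the full (generally non-closed) image $A\mathcal E$ of an infinite-dimensional $\mathcal E$ under a bounded injective $A$ meets the dense operator range $T_0\mathcal H_0$ only at $0$; finite linear independence arguments do not control infinite $\ell^2$-combinations. The paper's route is clean: transport $T_0$ to $\mathcal H_1=\operatorname{clos}T_0\mathcal H_0$ via a unitary $U_1$, so $T_1=T_0U_1$ has the non-closed operator range $T_0\mathcal H_0$ inside $\mathcal H_1$; by Fillmore--Williams {\cite[Theorem 3.6]{filwil}} there is a unitary $U_0$ on $\mathcal H_1$ with $T_1\mathcal H_1\cap U_0T_1\mathcal H_1=\{0\}$, and one sets $A=U_0T_1|_{\mathcal E}$. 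This gives boundedness, injectivity, and the transversality simultaneously for every $N$. Your verification of \eqref{xx} via $P_{\mathcal H_0}X^*=X_0^*$ is correct.
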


\begin{proof} In {\cite[Theorem 4.4]{gam}} a cyclic power bounded operator $T_0$ and a  quasiaffinity $X_0$ are constructed 
such that $X_0T_0=SX_0$, 
\begin{equation}\label{x0}\|X_0^* h_\lambda\|\asymp\|h_\lambda\|, \end{equation}
and $T_0$ is not left invertible. 
Since $T_0$ is cyclic, we have $\dim\ker T_0^*\leq 1$. Since $T_0\prec S$, we have $\dim\ker T_0^*\geq 1$. Thus, 
$\dim\ker T_0^* = 1$, and the corollary is proved for $N=1$. 

Let $N\geq 2$. Denote by $\mathcal H_0$ the space on which $T_0$ acts. 
 Set $\mathcal H_1=\operatorname{clos}T_0\mathcal H_0$. There exists a unitary transformation  
$U_1\colon\mathcal H_1\to\mathcal H_0$. Set $T_1=T_0U_1$ and consider $T_1$ as an operator on $\mathcal H_1$. 
Clearly, $T_1\mathcal H_1=T_0\mathcal H_0$. 
Since $T_0$ is not left invertible, $T_1\mathcal H_1$ is not closed. By {\cite[Theorem 3.6]{filwil}}, there exists 
a unitary operator $U_0$ on $\mathcal H_1$ such that $$T_1\mathcal H_1\cap U_0T_1\mathcal H_1=\{0\}.$$
Take a subspace $\mathcal E\subset\mathcal H_1$ such that $\dim\mathcal E=N-1$, if $N<\infty$,  or set $\mathcal E=\mathcal H_1$, if $N=\infty$. 
  Put $$A=U_0T_1|_{\mathcal E}.$$ Define $T$ as in Lemma \ref{lemker} with $R=S$. Let $X$ be a quasiaffinity from Lemma \ref{lemker}. 
By construction,  $P_{\mathcal H_0}X^*=X_0^*$. Therefore, \eqref{xx} follows from \eqref{x0}. 
Thus, $T$ and $X$ satisfy the conclusion of the corollary.
\end{proof}
 
\begin{remark}\label{33} Let $T$ be a polynomially bounded operator, and let $T\prec S$. Then $\dim\ker T^*=1$. 
Indeed, there exists a contraction $R$ such that $R\prec T$ by \cite{berpr}, and  $\dim\ker R^*=1$ by
\cite{tak}.  The range of $T$ is closed by \cite{tak} and \cite{gam1}. 
\end{remark}

\section{The essential spectrum of the operator $T_0$ from \cite{gam}}

As usually,  $\sigma(A)$, $\sigma_e(A)$ and $\sigma_p(A)$ 
denote the spectrum, the essential spectrum, and the point spectrum of an operator $A$, respectively. The following lemma is actually proved in {\cite[Theorem 1]{her}}.

\begin{lemma}\label{lemher}
Let $A$ be a cyclic operator on a Hilbert space such that $\sigma(A)\subset\operatorname{clos}\mathbb D$ and $A\prec S$. 
Then $\mathbb T\subset\sigma_e(A)$ and $\sigma_e(A)$ is connected.
\end{lemma}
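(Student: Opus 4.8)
The plan is to derive the conclusion from Herrero's theorem (the cited \cite[Theorem 1]{her}) by verifying its hypotheses for $A$. Herrero's result characterizes, among other things, when a (cyclic) operator whose spectrum sits in $\operatorname{clos}\mathbb D$ has essential spectrum equal to or containing $\mathbb T$ together with a connectedness conclusion; the role of the hypothesis $A\prec S$ is to force $A$ to behave, spectrally, like a small perturbation of the shift. First I would record what $A\prec S$ gives us about the essential data of $A$: since there is a quasiaffinity $X$ with $XA=SX$, one has $\ker A=\{0\}$, $\operatorname{clos}A\mathcal H=\mathcal H$, and $\sigma_p(A^*)\supset\mathbb D$ with the eigenvectors $X^*h_\lambda$ spanning the space; in particular $\operatorname{clos}\mathbb D\subset\sigma(A)$, so combined with the standing hypothesis $\sigma(A)\subset\operatorname{clos}\mathbb D$ we get $\sigma(A)=\operatorname{clos}\mathbb D$.

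Next I would pin down $\sigma_e(A)$. The inclusion $\mathbb T\subset\sigma_e(A)$ is the heart of the matter: for each $\zeta\in\mathbb T$ one must show $A-\zeta I$ is not Fredholm. If $A-\zeta I$ were Fredholm, then, since $A$ is cyclic and $\sigma(A)=\operatorname{clos}\mathbb D$ with $\zeta$ on the topological boundary, its Fredholm index would be forced to be $0$ (a cyclic operator has $\dim\ker(A-\zeta I)^*\le 1$, and $\dim\ker(A-\zeta I)=0$ because eigenvectors of $A$ at distinct points would have to span against $\sigma_p(A)\cap\mathbb T$ being at most a null set — here I would lean on $A\prec S$: if $A-\zeta I$ had nontrivial kernel the intertwiner would produce a kernel for $S-\zeta I$, which is $\{0\}$). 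So $A-\zeta I$ would be both left and right essentially invertible with index $0$, hence essentially invertible, i.e. $\zeta\notin\sigma_e(A)$ would mean $A-\zeta I$ is invertible modulo compacts and, being index $0$ with trivial kernel of both $A-\zeta I$ and its adjoint mod the finite defect, actually bounded below perturbed by finite rank — contradicting that $\zeta\in\partial\sigma(A)$ is an approximate point and, via $X$, that $S-\zeta I$ is not bounded below. This is exactly the situation Herrero's theorem is designed to handle, so the clean route is to cite \cite[Theorem 1]{her} once the cyclicity, $\sigma(A)=\operatorname{clos}\mathbb D$, and $A\prec S$ facts are in place, rather than reproving the index bookkeeping.

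For connectedness of $\sigma_e(A)$: since $\mathbb T\subset\sigma_e(A)\subset\sigma(A)=\operatorname{clos}\mathbb D$, the essential spectrum is a closed subset of $\operatorname{clos}\mathbb D$ containing $\mathbb T$; the only way it could be disconnected is to have a clopen (in $\sigma_e(A)$) piece inside the open disc $\mathbb D$ separated from $\mathbb T$, which would by the Riesz functional calculus split off a spectral (hence reducing) summand of $A$ with spectrum contained in $\mathbb D$, contradicting cyclicity of $A$ versus the quasiaffinity $A\prec S$ (a shift has no nontrivial reducing parts, and $X$ restricted to such a summand would intertwine a bounded-below-spectrum operator into $S$, an impossibility). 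So $\sigma_e(A)$ is connected. Again, all of this is the content of the cited theorem; the write-up would simply be: \emph{the hypotheses of \cite[Theorem 1]{her} are satisfied since $A$ is cyclic, $\sigma(A)\subset\operatorname{clos}\mathbb D$, and $A\prec S$ (so $\sigma(A)\supset\operatorname{clos}\mathbb D$, whence $\sigma(A)=\operatorname{clos}\mathbb D$), and the conclusion $\mathbb T\subset\sigma_e(A)$ with $\sigma_e(A)$ connected is precisely that theorem's conclusion.}

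The main obstacle I anticipate is making the reduction to \cite[Theorem 1]{her} genuinely clean — Herrero's paper is stated in the language of closures of similarity orbits and bi-quasitriangularity, so the real work is translating "cyclic, $\sigma\subset\operatorname{clos}\mathbb D$, quasiaffine transform of $S$" into the exact hypothesis list Herrero uses, and checking that no extra regularity (e.g. on the minimal index function or on $\sigma_{le},\sigma_{re}$) is being silently assumed. If that translation is not literally available, the fallback is the self-contained index argument sketched above: kernels are killed by the intertwiner $X$ (for $A$) and by $X^*h_\lambda$ analyticity (against $A^*$), cyclicity caps the cokernel at dimension $1$, and $\partial\sigma(A)=\mathbb T$ forces non-Fredholmness there, while a disconnection of $\sigma_e(A)$ would contradict cyclicity via a Riesz idempotent.
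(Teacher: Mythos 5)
The first half of your argument, that $\mathbb T\subset\sigma_e(A)$, is essentially sound and close to the paper's route: from $A\prec S$ one gets $\mathbb D\subset\sigma_p(A^*)$, hence $\sigma(A)=\operatorname{clos}\mathbb D$, and $\ker(A-\zeta I)=\{0\}$ for every $\zeta$ because the intertwiner $X$ is injective and $\ker(S-\zeta I)=\{0\}$; since each $\zeta\in\mathbb T=\partial\sigma(A)$ lies in the approximate point spectrum, $A-\zeta I$ cannot be semi-Fredholm with trivial kernel (it would then be bounded below and, the index being locally constant and vanishing on the nearby resolvent set, invertible). The paper simply cites \cite[Theorem 0.7]{rr} for this. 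Your parenthetical about eigenvectors of $A$ and $\sigma_p(A)\cap\mathbb T$ being a null set is neither needed nor correct as stated, but the surrounding argument survives without it.

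The connectedness step, however, contains a genuine gap. You propose to rule out a disconnection of $\sigma_e(A)$ by extracting a Riesz idempotent from a clopen piece of $\sigma_e(A)$ lying inside $\mathbb D$. The Riesz functional calculus produces idempotents only from clopen decompositions of $\sigma(A)$, not of $\sigma_e(A)$; here $\sigma(A)=\operatorname{clos}\mathbb D$ is connected, so no nontrivial Riesz idempotent exists, and in general a disconnected essential spectrum can sit inside a connected spectrum, so no contradiction is reached. (Moreover, a Riesz idempotent need not be self-adjoint, so even when it exists it gives an invariant rather than a reducing subspace.) The paper's actual argument is an index argument: for $\lambda\in\mathbb D\setminus\sigma_e(A)$ one has $\ker(A-\lambda I)=\{0\}$ (via $X$) while $\dim\ker(A^*-\lambda I)=1$ (at least $1$ because $\mathbb D\subset\sigma_p(A^*)$, at most $1$ by cyclicity), so $\mathbb D\setminus\sigma_e(A)$ is exactly the region where $\operatorname{ind}(A-\lambda I)=-1$; \cite[Theorem 1]{her} then asserts that each component of this negative-index region is simply connected, and a compact set containing $\mathbb T$ and contained in $\operatorname{clos}\mathbb D$ whose complementary components in $\mathbb D$ are all simply connected must be connected. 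Your write-up never identifies $\mathbb D\setminus\sigma_e(A)$ with the index $-1$ region, which is precisely the hypothesis needed to invoke Herrero's theorem, so deferring to ``the content of the cited theorem'' does not close the gap.
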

\begin{proof} Since $A\prec S$, we have $\mathbb D\subset\sigma_p(A^*)$. Therefore, 
$\sigma(A)=\operatorname{clos}\mathbb D$ and $\mathbb T\subset\sigma_e(A)$ by {\cite[Theorem 0.7]{rr}}. 
Since $A$ is cyclic, $\dim\ker(A^*-\lambda I)=1$ for every $\lambda\in\mathbb D$.  
Since $A\prec S$,  $\ker(A-\lambda I)=\{0\}$ for every $\lambda\in\mathbb D$.  
We conclude that $$\mathbb D\setminus\sigma_e(A)=\{ \lambda\in\mathbb D\ : \ \operatorname{ind}(A-\lambda I)=-1\}.$$ 
By {\cite[Theorem 1]{her}}, components of $\mathbb D\setminus\sigma_e(A)$ are simple connected. 
Therefore, $\sigma_e(A)$ is connected.
\end{proof}

 We recall the detailed definition of the operator $T_0$ used in the previous section. 

The operator $T$ from  {\cite[Remark 2.2]{mt}} 
is defined as follows. 
Let $\{e_j\}_{j\geq 0}$ and $\{f_{lj}\}_{l\geq 1,j\geq 0}$ 
be orthonormal bases of 
Hilbert spaces $\mathcal E$ and $\mathcal F$, respectively. 
Put \begin{equation}\label{ttold}
\begin{gathered} Tf_{lj} = f_{l,j-1}, \ j\geq 1, \ \ Tf_{l0} = 0, \ \ l\geq 1,\\ Te_j = e_{j+1}, 
\ \ j\neq 3^k, \ k\geq 1, 
\ \ Te_{3^k} = e_{3^k+1}\! +\! f_{l,3^k},  \ \ k\geq 1, 
\  2^{l-1}\!\leq k\leq 2^l\!-1. \end{gathered}\end{equation}
It is proved in \cite{mt} that $T$ is a power bounded operator on $\mathcal E\oplus\mathcal F$.
It is easy to see that $\mathcal F\in\operatorname{Lat}T$, $T|_{\mathcal F}\cong\oplus_{l\geq 1}S^*$, and 
\begin{equation}\label{ss} P_{\mathcal E}T|_{\mathcal E}\cong S.\end{equation}
Put \begin{equation}\label{tt0}\mathcal H_0 = \bigvee_{n\geq 0}T^ne_0, \ \ 
T_0=T|_{\mathcal H_0}, 
\ \text{ and }\  X_0=P_{\mathcal E}|_{\mathcal H_0}. \end{equation} 
It is easy to see that $X_0T_0=P_{\mathcal E}T|_{\mathcal E}X_0$ and $\operatorname{clos}X_0\mathcal H_0=\mathcal E$. 
By {\cite[Lemma 4.2]{gam}}, 
\begin{equation}\label{mmold}\begin{aligned} \mathcal H_0 = \Bigl\{u\in \mathcal E\oplus\mathcal F: 
\ u=\sum_{j\geq 0}a_je_j+  \sum_{l\geq 1}\sum_{j\geq 0}
\bigl(\sum_{k\in\kappa_{lj}}a_{2\cdot 3^k+1-j}\bigr)f_{lj},  \\
   \sum_{j\geq 0}|a_j|^2 +\sum_{l\geq 1}\sum_{j\geq 0}
 \Bigl|\sum_{k\in\kappa_{lj}}a_{2\cdot 3^k+1-j}\Bigr|^2 < \infty\Bigr\}, \\
 \text{  where  } \ \kappa_{lj}=\{k\geq 1: j\leq 3^k, \  2^{l-1}\leq k\leq 2^l-1\}, \ l\geq 1, \ j\geq 0.\end{aligned}\end{equation}
It follows from \eqref{mmold} that $\ker X_0=\{0\}$. Taking into account the unitarily equivalence \eqref{ss}, we can accept  that 
$X_0$ realizes the relation $T_0\prec S$. It is proved in  {\cite[Theorem 4.4]{gam}} that \eqref{x0} is fulfilled and 
 $T_0$ is not left invertible.

\bigskip

The following lemma can be easily checked directly, therefore, its proof is omitted.
We mention only that if $n= 2\cdot 3^k+1-j$ for some $k\geq 1$ and $0\leq j\leq 3^k$, then 
such $k$ and $j$ are unique. 

\begin{lemma}\label{vzeta} Let the  Hilbert spaces $\mathcal E$ and $\mathcal F$ be defined as above, 
let $\mathcal H_0$ and $T$ be defined by \eqref{mmold} and \eqref{ttold}, and let $\zeta\in\mathbb T$. 
Define a unitary operator $V_\zeta$ on  $\mathcal E\oplus\mathcal F$ by the formulas
\begin{align*} V_\zeta e_n&=\zeta^j e_n, \begin{aligned}[t] &\ \text{ if there exist } k\geq 1 \text{ and } 0\leq j\leq 3^k \\ &\ \ \ \text{ such that } \ n= 2\cdot 3^k+1-j,\end{aligned}\\
 V_\zeta e_n&=e_n, \begin{aligned}[t]
&\ \text{ if } n\neq 2\cdot 3^k+1-j \text{ for every } k\text{ and } j  \\ &\ \ \ \text{ such that } k\geq 1\text{ and } 0\leq j\leq 3^k,\end{aligned}\\
V_\zeta f_{lj} &= \zeta^j f_{lj},  \ \ \ \ j\geq 0, \ l\geq 1.\end{align*}
Then $\mathcal F$, $\mathcal H_0\in\operatorname{Lat}V_\zeta$ and $V_\zeta \zeta T|_{\mathcal F}= T|_{\mathcal F}V_\zeta$.
\end{lemma}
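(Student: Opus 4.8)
The plan is to verify the three asserted properties of $V_\zeta$ directly from the explicit formulas in \eqref{ttold} and \eqref{mmold}, using the uniqueness remark preceding the lemma (for a given $n$ there is at most one pair $(k,j)$ with $k\ge 1$, $0\le j\le 3^k$, and $n=2\cdot 3^k+1-j$). First I would check that $V_\zeta$ is a well-defined unitary: it multiplies each element of the orthonormal basis $\{e_n\}\cup\{f_{lj}\}$ by a unimodular scalar (the scalar attached to $e_n$ being unambiguous precisely by the uniqueness remark), hence $V_\zeta$ extends to a unitary operator on $\mathcal E\oplus\mathcal F$. Then I would treat each of the three claims in turn.

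For $\mathcal F\in\operatorname{Lat}V_\zeta$ the inclusion is immediate, since $\mathcal F=\bigvee_{l,j}f_{lj}$ and $V_\zeta f_{lj}=\zeta^j f_{lj}\in\mathcal F$. For the intertwining relation $V_\zeta\,\zeta T|_{\mathcal F}=T|_{\mathcal F}V_\zeta$ on $\mathcal F$ I would just compute both sides on the basis vectors $f_{lj}$: for $j\ge 1$, $V_\zeta\zeta T f_{lj}=\zeta V_\zeta f_{l,j-1}=\zeta\cdot\zeta^{j-1}f_{l,j-1}=\zeta^j f_{l,j-1}$, while $T V_\zeta f_{lj}=\zeta^j T f_{lj}=\zeta^j f_{l,j-1}$; for $j=0$ both sides kill $f_{l0}$. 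So that relation holds.

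The main work is $\mathcal H_0\in\operatorname{Lat}V_\zeta$. Here I would use the description \eqref{mmold}: a vector $u=\sum_j a_j e_j+\sum_{l,j}\bigl(\sum_{k\in\kappa_{lj}}a_{2\cdot 3^k+1-j}\bigr)f_{lj}$ lies in $\mathcal H_0$ exactly when the displayed coefficient sequence is square-summable. Applying $V_\zeta$ replaces $a_n e_n$ by $\zeta^{j(n)}a_n e_n$, where for $n=2\cdot 3^k+1-j$ (the only indices whose $e$-coefficients get twisted) we have $j(n)=j$; that is, $V_\zeta u=\sum_j a'_j e_j+\sum_{l,j}(\ldots)f_{lj}$ with $a'_{2\cdot 3^k+1-j}=\zeta^j a_{2\cdot 3^k+1-j}$ and $a'_n=a_n$ otherwise. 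The key point, which makes the $f$-part of $V_\zeta u$ match the form required by \eqref{mmold}, is that in the inner sum $\sum_{k\in\kappa_{lj}}a_{2\cdot 3^k+1-j}$ the index $j$ is fixed across the sum, so the twist factor $\zeta^j$ is the same for every term; hence $\sum_{k\in\kappa_{lj}}a'_{2\cdot 3^k+1-j}=\zeta^j\sum_{k\in\kappa_{lj}}a_{2\cdot 3^k+1-j}$, which is exactly the coefficient of $f_{lj}$ obtained from the twisted sequence $\{a'_n\}$, and moreover the $f_{lj}$-coefficient of $V_\zeta u$ computed by $V_\zeta$ directly is $\zeta^j(\sum_{k\in\kappa_{lj}}a_{2\cdot 3^k+1-j})$, the same thing. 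Since $|a'_n|=|a_n|$ for all $n$, the square-summability condition in \eqref{mmold} is preserved, so $V_\zeta u\in\mathcal H_0$. Thus $V_\zeta\mathcal H_0\subset\mathcal H_0$, and since $V_\zeta$ is unitary with $V_{\bar\zeta}=V_\zeta^{-1}$ of the same form (or simply because $V_\zeta$ is unitary and $V_\zeta^{-1}$ also maps $\mathcal H_0$ into itself by the same argument applied to $\bar\zeta$), $\mathcal H_0$ is $V_\zeta$-reducing, in particular invariant. The only genuine obstacle is bookkeeping: one must be careful that the ``$e$-twist'' $e_n\mapsto\zeta^j e_n$ and the ``$f$-twist'' $f_{lj}\mapsto\zeta^j f_{lj}$ use the \emph{same} exponent $j$ for the index $n=2\cdot 3^k+1-j$ that feeds into $f_{lj}$, which is precisely why the definition of $V_\zeta$ is rigged that way; once this alignment is observed the verification is routine, which is why the authors omit it.
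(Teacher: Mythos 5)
Your verification is correct and is exactly the direct check the paper declares "can be easily checked directly" and omits: you use the uniqueness of the pair $(k,j)$ with $n=2\cdot 3^k+1-j$ to see that the twist $\zeta^j$ is constant across each inner sum $\sum_{k\in\kappa_{lj}}a_{2\cdot 3^k+1-j}$, so the form \eqref{mmold} is preserved, and the computation on the $f_{lj}$ settles the intertwining relation. Nothing further is needed.
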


\begin{theorem}\label{thmtt0} Let $T_0$ be defined by \eqref{tt0}, and let $\lambda\in\mathbb D$. If $\lambda\in\sigma_e(T_0)$, then 
$\{z :\ |z|=|\lambda|\}\subset\sigma_e(T_0).$
\end{theorem}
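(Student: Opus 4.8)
The statement asserts that the essential spectrum of $T_0$ inside $\mathbb D$ is rotation-invariant, i.e.\ a union of circles centered at the origin. The idea is to use the unitary operators $V_\zeta$ from Lemma~\ref{vzeta} to conjugate $T_0$ into $\zeta T_0$, up to a compact perturbation. Indeed, the essential spectrum is invariant under unitary equivalence, and if $V_\zeta T_0 V_\zeta^* - \zeta T_0$ were compact, then $\sigma_e(T_0)=\sigma_e(\zeta T_0)=\zeta^{-1}\sigma_e(T_0)$ for every $\zeta\in\mathbb T$, which gives exactly the conclusion (with the reservation that one must stay inside $\mathbb D$; since $\sigma_e(T_0)\subset\operatorname{clos}\mathbb D$, rotation permutes $\mathbb D\cap\sigma_e(T_0)$ among itself).

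First I would examine how $V_\zeta$ interacts with $T=T|_{\mathcal E\oplus\mathcal F}$ as defined by \eqref{ttold}. On $\mathcal F$, Lemma~\ref{vzeta} already gives the exact intertwining $V_\zeta(\zeta T|_{\mathcal F}) = T|_{\mathcal F} V_\zeta$. On $\mathcal E$, one computes $V_\zeta T e_n$ versus $\zeta T V_\zeta e_n$ directly from the formulas $Te_j=e_{j+1}$ (for $j\neq 3^k$) and $Te_{3^k}=e_{3^k+1}+f_{l,3^k}$. Because $V_\zeta$ multiplies $e_n$ by $\zeta^j$ exactly when $n=2\cdot3^k+1-j$, the "shift by one" action $e_n\mapsto e_{n+1}$ changes the exponent $j\mapsto j-1$ on those indices $n$ that lie in the special arithmetic progressions $\{2\cdot3^k+1-j\}$, and does nothing on the rest. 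The key point is that the discrepancy $V_\zeta T V_\zeta^* - \zeta T$ is supported on the basis vectors $e_n$ with $n$ near the "boundary" values $n=2\cdot3^k+1$ and $n=3^k$ (where the pattern of exponents is interrupted, and where the $f_{l,3^k}$ terms are created), plus possibly finitely many low-index corrections. One must then check that this discrepancy operator is compact: it acts as a block-diagonal (or near-diagonal) operator whose nonzero blocks are indexed by $k\geq 1$, each of bounded rank, with the block norms tending to $0$ as $k\to\infty$ (they should be $O(|\zeta^{\pm1}-1|)=O(1)$ but concentrated — so one needs the norms of the individual blocks to decay, or alternatively that only finitely many blocks are nonzero after all cancellations). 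Restricting everything to the invariant subspace $\mathcal H_0$ (noting $\mathcal H_0\in\operatorname{Lat}V_\zeta$ by Lemma~\ref{vzeta} and $\mathcal H_0\in\operatorname{Lat}T$), we get $V_\zeta|_{\mathcal H_0}\,T_0\,V_\zeta^*|_{\mathcal H_0} - \zeta T_0$ compact.

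The main obstacle is verifying the compactness of the commutator-type defect operator precisely. A naive estimate only shows it is bounded, not compact: multiplication of $e_n$ by $\zeta^j$ with $j$ ranging up to $3^k$ makes $V_\zeta$ genuinely "spread out," and $\zeta^j$ does not converge as $k\to\infty$. So one cannot hope that $V_\zeta-I$ or $V_\zeta T-\zeta TV_\zeta$ is small in norm. Instead, the right statement must be that after the exact cancellations dictated by the combinatorics of \eqref{ttold}, the defect is a \emph{finite-rank} operator (or a norm-convergent sum of finite-rank pieces). Concretely, within each progression $\{2\cdot 3^k+1-j : 0\le j\le 3^k\}$ the map $e_n\mapsto e_{n+1}$ sends $\zeta^j e_n$ to $\zeta^j e_{n+1}=\zeta\cdot\zeta^{j-1}e_{n+1}$, which is exactly $\zeta$ times what $V_\zeta$ assigns to $e_{n+1}$ — \emph{provided} $n+1$ still lies in the same progression with exponent $j-1$. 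This holds for all interior $n$; the defect appears only at the two endpoints of each progression and at the creation of the $f$-terms. One must carefully enumerate these finitely-many-per-$k$ exceptional basis vectors, confirm that the resulting rank-per-$k$ pieces have norms bounded by a constant times $|\zeta-1|+|\bar\zeta-1|$ but are \emph{mutually orthogonally supported}, and then argue that a block-diagonal operator with uniformly bounded finite-rank blocks is compact iff the block norms vanish — which forces one to extract an extra decay, OR (more likely, and the route I would actually pursue) to show the exceptional contributions telescope so that only finitely many survive. I expect the bookkeeping of these boundary terms, especially tracking the indices $2\cdot 3^k+1-j$ against $3^k$ and $3^k+1$ and the ranges $2^{l-1}\le k\le 2^l-1$, to be the genuinely delicate part; everything else (invariance of $\sigma_e$ under unitary equivalence and compact perturbation, and the reduction to $\mathcal H_0$) is routine.
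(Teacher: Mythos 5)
Your strategy --- conjugate by $V_\zeta$ and argue that $V_\zeta T_0 V_\zeta^* - \zeta^{-1}T_0$ (or $\zeta T_0$) is compact, then invoke invariance of $\sigma_e$ under unitary equivalence and compact perturbations --- does not go through, and you have in effect flagged the fatal point yourself without resolving it. The defect operator is not compact. From \eqref{ttold} and the definition of $V_\zeta$ one computes that $V_\zeta TV_\zeta^*e_n=\zeta^{-1}Te_n$ only for $n$ interior to a progression $\{2\cdot3^k+1-j:\ 0\le j\le 3^k\}$; at the endpoint $n=2\cdot3^k+1$ one gets $V_\zeta TV_\zeta^*e_n=e_{n+1}$ while $\zeta^{-1}Te_n=\zeta^{-1}e_{n+1}$; at $n=3^k$ one gets $V_\zeta TV_\zeta^*e_{3^k}=\zeta^{3^k}Te_{3^k}$; and for $n$ outside all progressions one gets $V_\zeta TV_\zeta^*e_n=Te_n$. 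Hence, relative to \emph{any} fixed rotation $\zeta'T$, the defect contains infinitely many mutually orthogonally supported pieces whose norms are bounded below by a positive constant depending only on $\zeta$ (not on $k$); such a block operator is bounded but never compact for $\zeta\neq1$. There is no telescoping and no decay to extract: the compactness statement your argument requires is simply false, so the "delicate bookkeeping" you defer cannot be completed.

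The paper's proof avoids this entirely by never asking for an intertwining on $\mathcal E$. Since $P_{\mathcal E}T|_{\mathcal E}\cong S$ and $S-\lambda$ is bounded below for $\lambda\in\mathbb D$, any normalized sequence $u_N=x_N\oplus y_N\in\mathcal H_0$ with $\|(T_0-\lambda)u_N\|\to0$ must have $\|x_N\|\to0$, and therefore $\|(T-\lambda)y_N\|\to0$: every singular sequence lives asymptotically in $\mathcal F$. On $\mathcal F$ the relation $V_\zeta\,\zeta T|_{\mathcal F}=T|_{\mathcal F}V_\zeta$ from Lemma \ref{vzeta} is \emph{exact} (no defect at all, because $T|_{\mathcal F}$ is a pure backward shift in the index $j$ and $V_\zeta$ acts diagonally by $\zeta^j$), so $\{V_\zeta u_N\}_N$ is a singular sequence for $T_0-\zeta\lambda$ inside $\mathcal H_0$, which is $V_\zeta$-invariant. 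This localization of singular sequences to $\mathcal F$ is the idea missing from your proposal; the compact-perturbation framework should be abandoned rather than repaired.
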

\begin{proof} Let $\lambda\in\sigma_e(T_0)$. Then there exists a sequence $\{u_N\}_N\subset\mathcal H_0$ such that $\|u_N\|=1$ for all $N$ and $\lim_N\|(T_0-\lambda)u_N\|=0$. 
We have $u_N=x_N\oplus y_N$, where $x_N\in\mathcal E$ and $y_N\in\mathcal F$, and
$$\|(T_0-\lambda)u_N\|^2=\|(T-\lambda)y_N+P_{\mathcal F}(T-\lambda)x_N\|^2 + \|P_{\mathcal E}(T-\lambda)x_N\|^2.$$
If $\limsup_N\|x_N\|>0$, then $\limsup_N\|P_{\mathcal E}(T-\lambda)x_N\|>0$ due to \eqref{ss}, a contradiction. Thus, 
$\lim_N\|x_N\|=0$. Consequently,  $\lim_N\|(T-\lambda)y_N\|=0$. 

Let $|z|=|\lambda|$. Then $z=\zeta\lambda$ for some $\zeta\in\mathbb T$. Let $V_\zeta$ be the operator from Lemma \ref{vzeta}. 
 We have $V_\zeta u_N\in\mathcal H_0$ and $\|V_\zeta u_N\|=1$ for all $N$. Furthermore, 
$$\|(T_0-z)V_\zeta u_N\|=\|(T-z)V_\zeta( x_N \oplus y_N)\|
\leq \|(T-z)V_\zeta x_N\|+ \|(T-z) V_\zeta y_N\|.$$
We have   $\lim_N\|(T-z)V_\zeta x_N\|=0$.  By Lemma \ref{vzeta},  
$$ (T-z) V_\zeta y_N = V_\zeta(\zeta T-z) y_N =V_\zeta \zeta (T-\lambda)y_N \ \ \text{ for every } N.$$
We have   $\| (T-z) V_\zeta y_N\|=\|(T-\lambda)y_N\|\to 0.$ Thus, the  sequence $\{V_\zeta u_N\}_N$ 
shows that $z\in\sigma_e(T_0)$.
\end{proof}

\begin{corollary} Let $T_0$ be defined by \eqref{tt0}. Then $\sigma_e(T_0)=\operatorname{clos}\mathbb D$.
\end{corollary}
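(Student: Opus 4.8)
The plan is to combine Theorem \ref{thmtt0} with Lemma \ref{lemher} to pin down $\sigma_e(T_0)$ exactly. Recall that $T_0$ is cyclic (it is the restriction of $T$ to the cyclic subspace $\mathcal H_0 = \bigvee_{n\geq 0} T^n e_0$, with cyclic vector $e_0$), it satisfies $T_0 \prec S$ (via $X_0$), and $\sigma(T_0) = \operatorname{clos}\mathbb D$ since $\mathbb D \subset \sigma_p(T_0^*)$ and $\|T_0\| $ bounds the spectrum inside $\operatorname{clos}\mathbb D$ (as $T_0$ is power bounded, hence its spectral radius is at most $1$). So the hypotheses of Lemma \ref{lemher} are met, giving $\mathbb T \subset \sigma_e(T_0)$ and $\sigma_e(T_0)$ connected.

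First I would observe that $\sigma_e(T_0)$ is a nonempty closed subset of $\operatorname{clos}\mathbb D$ (it is contained in $\sigma(T_0) = \operatorname{clos}\mathbb D$), and that by Lemma \ref{lemher} it is connected and contains $\mathbb T$. Next I would use Theorem \ref{thmtt0}: the essential spectrum inside $\mathbb D$ is rotation-invariant, i.e. if $\lambda \in \sigma_e(T_0) \cap \mathbb D$ then the entire circle $\{z : |z| = |\lambda|\}$ lies in $\sigma_e(T_0)$. Thus $\sigma_e(T_0) \cap \mathbb D$ is a union of concentric circles, say $\sigma_e(T_0) \cap \mathbb D = \{z : |z| \in E\}$ for some closed set $E \subset [0,1)$. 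Adding the outer circle $\mathbb T$, we get $\sigma_e(T_0) = \{z : |z| \in E \cup \{1\}\}$.

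Then I would argue that connectedness forces $E \cup \{1\}$ to be an interval, hence $E \cup \{1\} = [r, 1]$ for some $r \in [0,1]$. The final step is to show $r = 0$, i.e. that there is no gap: equivalently, that $0 \in \sigma_e(T_0)$, since once $0$ is in the (rotation-invariant, connected) set the whole disc must be. But $0 \in \sigma_e(T_0)$ is exactly the statement that $T_0$ is not semi-Fredholm at $0$ with finite-dimensional kernel and cokernel and closed range — and we know $\ker T_0 = \{0\}$ (from $X_0 T_0 = S X_0$, $\ker X_0 = \ker S = \{0\}$) while the range of $T_0$ is \emph{not closed} (this is precisely the ``$T_0$ is not left invertible'' fact from \cite{gam}, cited after \eqref{mmold}); an operator with trivial kernel but non-closed range is not Fredholm, so $0 \in \sigma_e(T_0)$. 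Hence $E \ni 0$, and since $E \cup \{1\} = [r,1]$ must then have $r = 0$, we conclude $\sigma_e(T_0) = \operatorname{clos}\mathbb D$.

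**The main obstacle** I anticipate is the bookkeeping around exactly which version of ``essential spectrum'' is in play and making the ``connected union of concentric circles plus $\mathbb T$ is an annulus $[r,1]$'' step airtight — in particular confirming that $0 \notin \sigma_e(T_0)$ would actually disconnect the set (it would, since removing the center from a punctured-at-center disc still leaves it connected, so really the argument is: $0\in\sigma_e(T_0)$ directly from non-closed range, and then rotation-invariance plus $\mathbb T\subset\sigma_e(T_0)$ immediately gives everything). Once one notices that $0 \in \sigma_e(T_0)$ can be had for free from the non-closed-range fact, Lemma \ref{lemher}'s connectedness is not even strictly needed — Theorem \ref{thmtt0} applied to $\lambda = 0$ already yields $\{|z| = 0\} = \{0\} \subset \sigma_e(T_0)$ trivially, so the real content is applying Theorem \ref{thmtt0} to a sequence of radii approaching $1$; but the cleanest writeup is: $T_0$ has non-closed range $\Rightarrow 0 \in \sigma_e(T_0)$, $\mathbb T \subset \sigma_e(T_0)$ by Lemma \ref{lemher}, and for any $\lambda \in \mathbb D$ we get $\{|z|=|\lambda|\}\subset \sigma_e(T_0)$ once we know \emph{some} point of that circle is essential — which we can arrange by interpolating, or more simply by invoking connectedness of $\sigma_e(T_0)$ together with rotation-invariance of its intersection with $\mathbb D$ to rule out any missing annulus.
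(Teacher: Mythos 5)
Your proposal is correct and uses exactly the paper's three ingredients: Lemma \ref{lemher} for connectedness of $\sigma_e(T_0)$ and $\mathbb T\subset\sigma_e(T_0)$, Theorem \ref{thmtt0} for rotation-invariance inside $\mathbb D$, and $0\in\sigma_e(T_0)$ coming from the non-closed range of $T_0$ (the paper cites {\cite[Theorem 4.4]{gam}} for this). The only difference is cosmetic: you pass to the connected image $\{|z| : z\in\sigma_e(T_0)\}\subset[0,1]$ directly, while the paper runs the same topology as a proof by contradiction, separating $\sigma_e(T_0)$ by an open annulus around a hypothetical omitted circle.
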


\begin{proof} By  Lemma \ref{lemher}, $\sigma_e(T_0)$ is connected. 
If $0\neq\lambda\in\mathbb D$ and $\lambda\not \in\sigma_e(T_0)$, then, by Theorem \ref{thmtt0},  
$$\{z :\ |z|=|\lambda|\}\subset\mathbb D\setminus\sigma_e(T_0).$$ Since the set $\{z :\ |z|=|\lambda|\}$ is compact and the set 
$\mathbb D\setminus\sigma_e(T_0)$ is open, there exists $\varepsilon>0$ such that 
$$\{z :\ |\lambda|-\varepsilon<|z|< |\lambda|+\varepsilon\}\subset\mathbb D\setminus\sigma_e(T_0).$$ 
By Lemma \ref{lemher},  $\mathbb T\subset\sigma_e(T_0)$. By {\cite[Theorem 4.4]{gam}}, $0\in\sigma_e(T_0)$.  
Therefore,  $\sigma_e(T_0)$ can not be connected, a contradiction. 
\end{proof}

\end{document}